\newtheorem{theorem}{Theorem}[section]
\newtheorem{lemma}[theorem]{Lemma}
\newtheorem{question}[theorem]{Question}
\DeclareMathOperator{\tw}{tw}
\DeclareMathOperator{\T}{{\text{\sf{T}}}}
\def\dd{\hbox{-}}   
\newcommand{\mf}{\mathfrak}
\newcommand{\mca}{\mathcal}
\newcommand{\poi}{\mathbb{N}} 
\newcommand{\pre}{\preccurlyeq}
\newcounter{tbox}
\newcommand{\sta}[1]{\medskip\medskip\refstepcounter{tbox}\noindent{\parbox{\textwidth}{(\thetbox) \emph{#1}}}\vspace*{0.3cm}}
\newcommand{\mylongtitle}[1]{%
  \ifodd\value{page}%
    \protect\parbox{0.97\linewidth}{#1}\hfill%
  \else%
    \hfill\protect\parbox{0.97\linewidth}{#1}%
  \fi%
}
\title[Induced subgraphs and tree decompositions XVII.]{Induced subgraphs and tree decompositions XVII. Anticomlete sets of large treewidth}
\author{Maria Chudnovsky$^{\dagger \ast}$}
\author{Sepehr Hajebi$^{\mathsection}$}
\author{Sophie Spirkl$^{\mathsection \parallel}$}
\thanks{$^{\dagger}$ Princeton University, Princeton, NJ, USA}
\thanks{$^{\mathsection}$ Department of Combinatorics and Optimization, University of Waterloo, Waterloo, Ontario, Canada}
\thanks{$^{\ast}$ Supported by  NSF-EPSRC Grant DMS-2120644, AFOSR grant FA9550-22-1-0083 and NSF Grant DMS-2348219.} 
\thanks{$^{\parallel}$ We acknowledge the support of the Natural Sciences and Engineering Research Council of Canada (NSERC), [funding reference number RGPIN-2020-03912].
Cette recherche a \'et\'e financ\'ee par le Conseil de recherches en sciences naturelles et en g\'enie du Canada (CRSNG), [num\'ero de r\'ef\'erence RGPIN-2020-03912]. This project was funded in part by the Government of Ontario. This research was conducted while Spirkl was an Alfred P. Sloan Fellow.}
\date {\today}
\begin{document}
\maketitle
\sloppy

\begin{abstract}
Two sets $X, Y$ of vertices in a graph $G$ are \textit{anticomplete} if $X\cap Y=\varnothing$ and there is no edge in $G$ with an end in $X$ and an end in $Y$. We prove that every graph $G$ of sufficiently large treewidth contains two anticomplete sets of vertices each inducing a subgraph of large treewidth unless $G$ contains, as an induced subgraph, a highly structured graph of large treewidth that is an obvious counterexample to this statement. These are: complete subgraphs, complete bipartite graphs and \textit{interrupted $s$-constellations}. The latter is a slightly adjusted version of a well-known construction by Bonamy et al.
\end{abstract}

\section{Introduction}\label{sec:intro}
The set of all positive integers is denoted by $\poi$. Graphs in this paper have finite vertex sets, no loops and no parallel edges. Given a graph $G=(V(G), E(G))$ and $X\subseteq V(G)$, we use both $X$ and $G[X]$ to denote the induced subgraph of $G$ with vertex set $X$ (also called the \textit{subgraph of $G$ induced by $X$}). For a graph $H$, we say that a graph $G$ is \textit{$H$-free} if $G$ has no induced subgraph isomorphic to $H$. For standard graph theoretic terminology, see \cite{diestel}.

This paper continues a study of the interplay between the induced subgraphs of a graph $G$ and its treewidth (denoted $\tw(G)$; see \cite{diestel} for a definition). Treewidth is a measure of structural complexity in graphs. Graphs of small treewidth are restricted in their \textit{global} structure, and this restriction wanes as the treewidth increases. Once the treewidth becomes sufficiently large, then there likely is something to say about the \textit{local} structure of the graph. For instance, the \textit{Grid Theorem} of Robertson and Seymour \cite{GMV}, Theorem~\ref{thm:wallminor} below, says that every graph of large enough treewidth has a subgraph of large treewidth that is planar.

\begin{theorem}[Robertson and Seymour \cite{GMV}]\label{thm:wallminor}
For every integer $r\in \poi$,
there is a constant $f_{\ref{thm:wallminor}}=f_{\ref{thm:wallminor}}(r)\in \poi$ such that every graph $G$ with $\tw(G) \geq f_{\ref{thm:wallminor}}$ has a subgraph isomorphic to a subdivision of $W_{r\times r}$.
\end{theorem}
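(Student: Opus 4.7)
The plan is to leverage the well-known duality between treewidth and brambles: any graph of treewidth at least $k$ contains a bramble (a collection of pairwise touching connected subgraphs such that no set of at most $k$ vertices hits them all) of order at least $k+1$. The starting point is to invoke this duality and obtain, from the large-treewidth hypothesis, a bramble $\mathcal{B}$ of order much larger than $r$, where ``much larger'' is whatever function of $r$ the subsequent steps require.

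Given such a large bramble, the next step is to build the $r \times r$ wall piece by piece. The general strategy is iterative: using Menger's theorem combined with the bramble, one finds many pairwise internally disjoint paths between carefully chosen subsets of vertices, routed so that they interact with the bramble as ``rows'' and ``columns''. Concretely, one constructs rows and columns one at a time, arguing that if the construction cannot be extended then one can separate the bramble with too few vertices, contradicting its order. This stage produces a minor of $W_{r \times r}$, not yet a subdivision.

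To upgrade the minor to a subdivision, I would exploit the fact that $W_{r \times r}$ has maximum degree $3$. In any minor model of a graph of maximum degree $3$, each branch set can be pruned to a subtree whose leaves are exactly the at most three attachment points used by edges of the model; choosing internally disjoint paths inside each such subtree yields a topological minor. This standard conversion loses only constant factors in the bounds and turns the wall minor into a subdivision inside $G$.

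The main obstacle is the middle step: extracting a highly organized grid-like pattern of paths from an unstructured bramble. Classical proofs by Robertson and Seymour, and the cleaner variants by Diestel--Jensen--Gorbounov--Thomassen and the polynomial-bound version of Chekuri and Chuzhoy, all identify this as the technical heart of the argument; one must simultaneously control path-reusability, crossings, and the bramble's ability to survive small separators. Since Theorem~\ref{thm:wallminor} is used here only for its existence, we do not need a small bound on $f_{\ref{thm:wallminor}}(r)$, which considerably simplifies the bookkeeping.
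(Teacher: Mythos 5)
This theorem is stated in the paper as a citation to Robertson and Seymour's Graph Minors V; the paper gives no proof of its own, so there is nothing internal to compare your attempt against.

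As a roadmap, your proposal is in the right spirit and matches the outline of the standard proofs (Robertson--Seymour, the Diestel--Jensen--Gorbounov--Thomassen simplification, and the later quantitative improvements). The two outer steps you describe are sound: treewidth--bramble duality does give a bramble of order exceeding any desired threshold, and the conversion from a minor of a graph with maximum degree $3$ to a topological minor is a correct and standard reduction (prune each branch set to a subtree with at most three leaves meeting the attachment edges, then use the three leaf-paths from a suitable branch vertex). So nothing you wrote is wrong.

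However, the middle step is not merely ``bookkeeping'' that gets simpler once you give up on good bounds --- it is the entire content of the theorem, and your proposal does not actually supply it. Saying that one ``constructs rows and columns one at a time, arguing that if the construction cannot be extended then one can separate the bramble with too few vertices'' is a description of the goal, not an argument: in particular, you have not explained how to maintain disjointness between the partially built rows and columns, how to ensure the crossing pattern is consistent (so that the paths you route actually form a grid rather than an arbitrary linkage), or how to recover a large enough bramble after deleting what has already been used. These are exactly the points where the known proofs need new ideas (e.g., external connectivity / tangle arguments, or the mesh / $A$-path machinery). As written, the proposal identifies the hard step and defers it, which means there is a genuine gap at the heart of the proof.
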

(We write $W_{r \times r}$ to denote the $r$-by-$r$ hexagonal grid, also known as the $r$-by-$r$ \textit{wall}. It is well-known \cite{diestel} that every subdivision of $W_{r\times r}$ has treewidth $r$, and so does the line graph of every subdivision of $W_{r\times r}$; see Figure~\ref{fig:basic}.)
  \begin{figure}[t!]
        \centering
\includegraphics[width=0.8\textwidth]{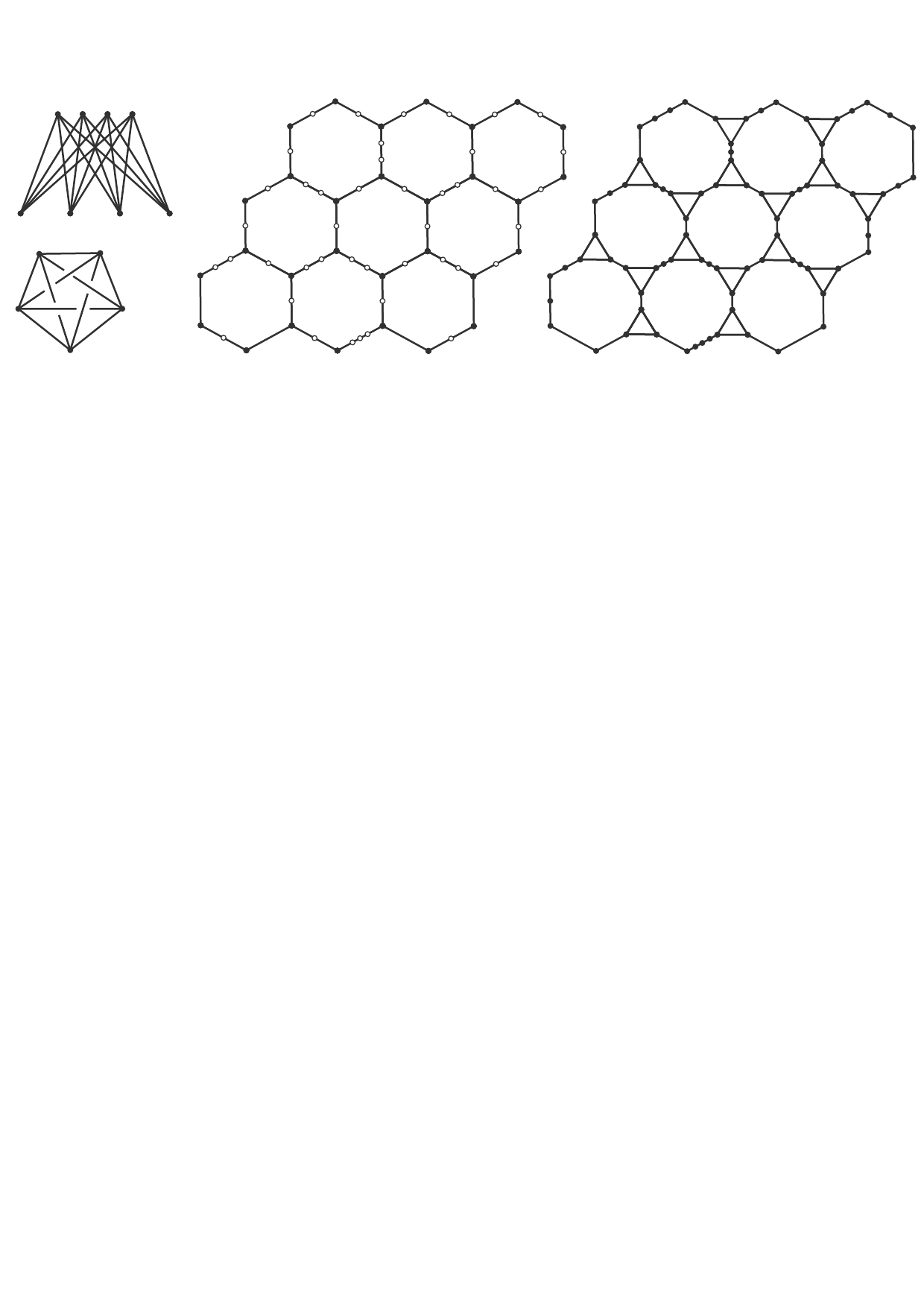}
        \caption{Examples of graphs with large treewidth: $K_{4,4}$ (top left), $K_5$ (bottom left), a subdivision of the $W_{4\times 4}$ and the line graph of a subdivision of the $W_{4\times 4}$ (right).}
        \label{fig:basic}
    \end{figure}   

A primary goal in this series of papers is to explore the analog of Theorem~\ref{thm:wallminor} for induced subgraphs. On that note, we would often like to answer questions of the following generic form: Must an arbitrary graph of sufficiently large treewidth contain an induced subgraph of relatively large treewidth that is ``special'' in some way? This paper in particular pursues a line of work on characterizing the induced subgraph obstructions to bounded treewidth under some density or sparsity assumption. For instance, a result of Korhonen \cite{korhonen2023grid} shows that, apart from subdivided walls and their line graphs, all other obstructions contain at least one vertex of large degree. An easy consequence of a result of K\"uhn and Osthus \cite{kuhn2004induced} is that, apart from complete and complete bipartite graphs, all obstructions have bounded degeneracy.

Here, in analogy to a conjecture of El-Zahar and Erd\H{o}s \cite{eez} about graphs with large chromatic number, we ask: when does a graph of large enough treewidth have an induced subgraph with at least two components of large treewidth? Our main result, Theorem \ref{thm:EEZ_isg}, answers this question.

For a graph $G$, we say that $X, Y\subseteq V(G)$ are \textit{anticomplete in $G$} if $X\cap Y=\varnothing$ and there is no edge in $G$ with an end in $X$ and an end in $Y$ (if $X=\{x\}$ is a singleton, then we also say \textit{$x$ is anticomplete to $Y$ in $G$}).

\begin{question}\label{q:main}
    When does a graph of large enough treewidth contain two anticomplete sets of vertices each inducing a subgraph of large treewidth?
\end{question}

Not always. It is well-known \cite{diestel} that for every $t\in \poi$, both the complete graph $K_{t+1}$ and the complete bipartite graph $K_{t,t}$ have treewidth $t$ (see Figure~\ref{fig:basic}). However, for every two anticomplete sets $X,Y$ in a complete or a complete bipartite graph, either $X$ or $Y$ is a stable set (where a \textit{stable set} in a graph $G$ is a subset of $V(G)$ that contains no two vertices adjacent in $G$).
\medskip

Our first result proves that complete graphs and complete bipartite graphs are the only culprits (recall that an \textit{induced minor} of a graph is obtained by only removing vertices and contracting edges, unlike \textit{minors} which also allows for removing edges):

\begin{theorem}\label{thm:EEZ_ind_minor}
    For all $a,b,c\in \poi$, there is a constant $f_{\ref{thm:EEZ_ind_minor}}=f_{\ref{thm:EEZ_ind_minor}}(a,b,c)\in \poi$ such that for every graph $G$ with $\tw(G)>f_{\ref{thm:EEZ_ind_minor}}$, one of the following holds.
    \begin{enumerate}[{\rm (a)}]
        \item\label{thm:EEZ_ind_minor_a} There is an (induced) subgraph of $G$ isomorphic to $K_{a}$.
        \item\label{thm:EEZ_ind_minor_b} There is an induced minor of $G$ isomorphic to $K_{b,b}$.
        \item\label{thm:EEZ_ind_minor_c} There are anticomplete subsets $X,Y$ of $V(G)$ with $\tw(X),\tw(Y)\geq c$.
        \end{enumerate}
\end{theorem}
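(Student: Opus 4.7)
The plan is to derive the theorem from the Grid Theorem (Theorem~\ref{thm:wallminor}). Given $a,b,c\in\poi$, choose $r=r(a,b,c)$ very large and apply Theorem~\ref{thm:wallminor} to find a subgraph $H$ of $G$ that is a subdivision of $W_{r\times r}$. The wall $H$ naturally splits into two subdivided subwalls $H_1,H_2$---say on the top $\lfloor r/2\rfloor$ rows and on the bottom $\lfloor r/2\rfloor$ rows---each of treewidth at least $\lfloor r/2\rfloor$, which we arrange to exceed $c$. The sets $V(H_1),V(H_2)$ are anticomplete within $H$; if no edge of $G$ crosses between them, outcome~\ref{thm:EEZ_ind_minor_c} holds immediately. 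Otherwise, the task is to convert the crossing edges into outcome~\ref{thm:EEZ_ind_minor_a} or~\ref{thm:EEZ_ind_minor_b}.

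For the non-trivial case, I would iteratively \emph{clean} the wall, passing to smaller subdivided subwalls (keeping each half of treewidth $\geq c$) on which the attachments of $G\setminus V(H)$ and the crossing edges are well-structured. A typical cleaning step examines vertices of $G$ with many neighbors in the current subwall: via Ramsey combined with the absence of $K_a$ (i.e.\ the failure of outcome~\ref{thm:EEZ_ind_minor_a}), such a vertex either forces outcome~\ref{thm:EEZ_ind_minor_b} directly, or can be safely discarded along with a controlled portion of the wall. After cleaning, the subwall itself supplies the branch sets for the induced minor: on each side, partition the cleaned subwall into $b$ pairwise disjoint connected subgraphs (e.g.\ groups of consecutive rows separated by unused buffer rows), which are pairwise anticomplete in $G$ as a consequence of the cleaning. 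A pigeonhole argument on the surviving crossing edges then selects $b$ branch sets on each side so that every cross pair has at least one edge between them, producing $K_{b,b}$ as an induced minor.

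The main obstacle will be the cleaning phase. The difficulty is that many vertices of $G\setminus V(H)$ may each have a small but nontrivial number of neighbors in the wall, with none individually of high degree, so a naive removal argument will not suffice; one needs Ramsey-style arguments to group such vertices into uniform patterns, and then to use the freedom provided by the induced-minor formulation of outcome~\ref{thm:EEZ_ind_minor_b}---large connected branch sets absorb stray edges by contraction---to either build $K_{b,b}$ or clean further. Controlling this process so that each half of the current subwall retains treewidth $\geq c$ after the iterations terminate is, I expect, the most technical part of the proof, and will likely reuse the cleaning machinery developed in earlier papers of this series.
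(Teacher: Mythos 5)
Your proposal rests on the Grid Theorem as the starting point, and this is where it breaks down. The Grid Theorem produces a subdivided wall $H$ as a \emph{subgraph} of $G$, not as an induced subgraph, so $G[V(H)]$ may contain arbitrarily many chords in addition to the wall edges, and the vertices of $G\setminus V(H)$ may attach to the wall arbitrarily. Deleting these chords and stray attachments without destroying the treewidth of either half is precisely the content of the theorem being proved: if you could ``clean'' the wall to a point where $G[V(H_1)]$ and $G[V(H_2)]$ are essentially induced subdivided walls, you would already be done. The proposal acknowledges this as ``the main obstacle'' but offers only a hope (Ramsey on neighbor patterns, plus flexibility from the induced-minor formulation) rather than a concrete plan. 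In particular, the final pigeonhole step is not sound as stated: even after some cleaning, having $b$ anticomplete connected pieces on each side and ``some'' crossing edges does not yield $b$ pieces on each side with \emph{every} cross pair joined; you would need an extremely dense and well-distributed set of crossing edges, which you have no mechanism to produce.

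The paper does not go via the Grid Theorem at all. It first observes that a counterexample $G$ (an $(a,b,c)$-candidate) is automatically $\max\{a,b,2c\}$-clean --- in particular it has no induced subdivided wall and no line graph of one, since either would split into two anticomplete halves of treewidth $\geq c$ --- and then invokes the \emph{strong block} theorem (Theorem~\ref{thm:noblock}), a much stronger tool from an earlier paper in the series, to obtain a strong $(k,l)$-block. From there the proof is a chain of reductions: Ramsey arguments (Theorems~\ref{thm:multiramsey} and~\ref{thm:productramsey}) turn the strong block into a linear $\mu$-model (Section~\ref{sec:comp}); the ``alignment'' technique turns that into a linear $B$-induced $(\rho,\sigma)$-model (Section~\ref{sec:half}); and a further alignment/Ramsey argument turns that into either an induced $(b,b)$-model or two anticomplete subgraphs of large treewidth (Section~\ref{sec:halftofull}). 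None of this machinery is a ``cleaning of a subdivided wall,'' and the strong block theorem cannot be replaced by the Grid Theorem without re-deriving most of that earlier paper. So while your high-level instinct (Ramsey, plus exploiting $K_a$-freeness, plus the flexibility of induced minors) points in the right direction, the proposed route through the Grid Theorem is not viable and the hard steps are not supplied.
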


As mentioned earlier, Theorem~\ref{thm:EEZ_ind_minor} is reminiscent of a conjecture by El-Zahar and Erd\H{o}s \cite{eez} that every graph $G$ of sufficiently large chromatic number contains two anticomplete sets each inducing a subgraph of large chromatic number, unless there is a large complete subgraph in $G$. This would be sharp because complete graphs have arbitrarily large chromatic number (and no two non-empty anticomplete sets of vertices). 

Likewise, Theorem~\ref{thm:EEZ_ind_minor} is sharp in the sense that both complete graphs and complete bipartite graphs are unavoidable outcomes. Nevertheless, one may ask whether it is necessary for the complete bipartite outcome in \ref{thm:EEZ_ind_minor}\ref{thm:EEZ_ind_minor_b} to appear as an ``induced minor'' (and not an ``induced subgraph''). The answer is ``yes,'' provided by an explicit construction of $(K_4,K_{3,3})$-free graphs with arbitrarily large treewidth and no two anticomplete induced subgraphs of treewidth $3$ or more.  We call these graphs ``interrupted $s$-constellations,'' and their exact definition is given below.
\medskip

For every integer $k$, we denote by $\poi_k$ the set of all positive integers at most $k$ (so $\poi_k=\varnothing$ if $k\leq 0$). Let $P$ be a graph which is a path. Then we write $P=p_1\dd \cdots\dd p_k$ to mean $V(P)=\{p_1,\ldots,p_k\}$ for $k\in \poi$, and $E(P)=\{p_ip_{i+1}:i\in \poi_{k-1}\}$. We call $p_1,p_k$ the \textit{ends} of $P$, and we call $P\setminus \{p_1,p_k\}$ the interior of $P$, denoted $P^*$. For vertices $u,v\in V(P)$, we denote by $u\dd P\dd v$ the subpath of $P$ from $u$ to $v$. Recall that the \textit{length} of a path is its number of edges. It follows that a path $P$ has distinct ends if and only if $P$ has non-zero length, and $P$ has non-empty interior if and only if $P$ has length at least two. Given a graph $G$, by a {\em path in $G$} we mean an induced subgraph of $G$ which is a path.

Let $s\in \poi$. An \textit{$s$-constellation} is a graph $\mf{c}$ for which there is a path $L$ in $\mf{c}$ such that $S=\mf{c}\setminus L$ is a stable set of cardinality $s$ in $G$ and every vertex in $S$ has at least one neighbor in $L$. We denote $\mf{c}$ by the pair $(S,L)$. We say that $\mf{c}$ is \textit{ample} if no two vertices in $S$ have a common neighbor in $L$. We say that $\mf{c}$ is \textit{interrupted} if the vertices in $S$ can be enumerated as $x_1,\ldots, x_s$ such that for all $i,j,k\in \poi_s$ with $i<j<k$ and every path $R$ in $\mf{c}$ from $x_i$ to $x_j$ with $R^*\subseteq L$, the vertex $x_k$ has a neighbor in $R^*$. See Figure~\ref{fig:interrupted}.
\begin{figure}[t!]
    \centering
    \includegraphics[width=\linewidth]{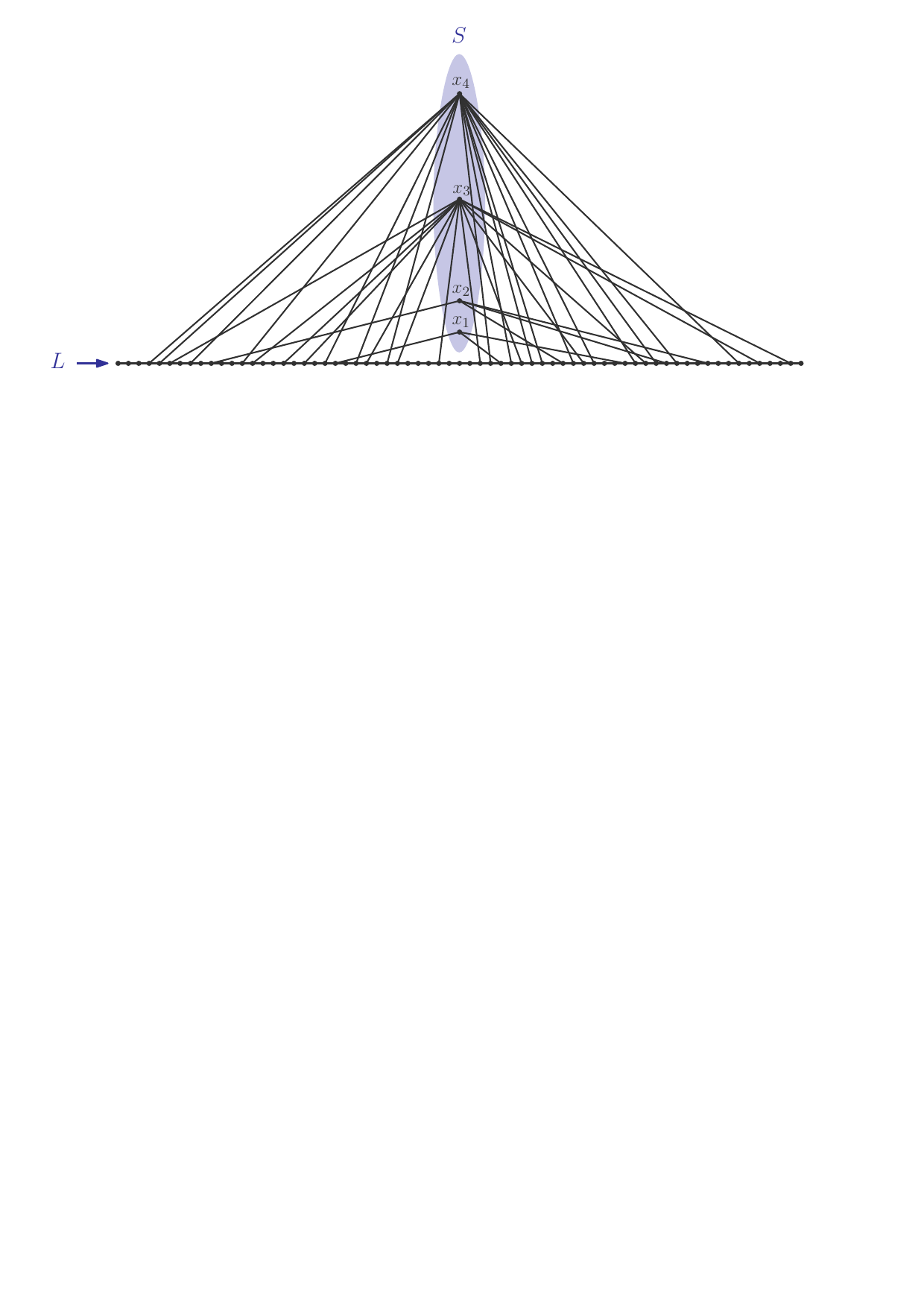}
    \caption{An ample interrupted $4$-constellation.}
    \label{fig:interrupted}
\end{figure}
\medskip

Interrupted $s$-constellations are a slightly adjusted version of a previous construction by Bonamy et al. \cite{deathstar} (see also \cite{tw9}). It is easily observed that for all $s\in \poi$, every ample interrupted $(2s+1)$-constellation is a $K_4$-free $K_{3,3}$-free graph with an induced minor isomorphic to $K_{s,s}$, and so with treewidth at least $s$. Moreover, it is proved in Lemma~9.2 from \cite{tw16} (and the proof is quite straightforward) that if $\mf{c}=(S,L)$ is an ample interrupted $s$-constellation for some $s\in \poi$, then for every two anticomplete induced subgraphs $X_1,X_2$ of $\mf{c}$, there exists $i\in \{1,2\}$ such that each component of $X_i$ intersects $S$ in at most one vertex. In particular, $\mf{c}$ has no two anticomplete induced subgraphs each of treewidth more than $2$.

It turns out that ample interrupted $s$-constellations are the only missing piece from a full answer to Question~\ref{q:main} with only induced subgraph outcomes. Our main result in this paper says that:

\begin{restatable}{theorem}{eezisg}\label{thm:EEZ_isg}
    For all $c,s,t\in \poi$, there is a constant $f_{\ref{thm:EEZ_isg}}=f_{\ref{thm:EEZ_isg}}(c,s,t)\in \poi$ such that for every graph $G$ with $\tw(G)>f_{\ref{thm:EEZ_isg}}$, one of the following holds.
    \begin{enumerate}[{\rm (a)}]
        \item\label{thm:EEZ_isg_a} There is an induced subgraph of $G$ isomorphic to $K_{t+1}$ or $K_{t,t}$.
        \item\label{thm:EEZ_isg_b} There is an induced subgraph of $G$ which is an ample interrupted $s$-constellation.
         \item \label{thm:EEZ_isg_c} There are anticomplete subsets $X,Y$ of $V(G)$ with $\tw(X),\tw(Y)\geq c$.
        \end{enumerate}
\end{restatable}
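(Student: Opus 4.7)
The natural plan is to bootstrap from the weaker Theorem~\ref{thm:EEZ_ind_minor}. Applying it with $a=t+1$ and with $b=N$ very large as a function of $s,t,c$, I may assume $G$ has neither $K_{t+1}$ as an induced subgraph nor two anticomplete sets of treewidth $\geq c$; hence $G$ contains an induced minor of $K_{N,N}$. The task then reduces to refining this induced minor into either a $K_{t,t}$ induced subgraph or an ample interrupted $s$-constellation.

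Fix branch sets $A_1,\ldots,A_N$ and $B_1,\ldots,B_N$ of the induced $K_{N,N}$ minor, which I may assume are trees. Using that $G$ is $K_{t+1}$-free, a Ramsey-type argument on the attachment patterns of the $A_i$'s to the $B_j$'s should yield a large sub-minor where either a $K_{t,t}$ is read off directly as an induced subgraph, or each $A_i$ can be shrunk to a single vertex $x_i$ and the set $\{x_i\}$ becomes stable. Then, inside each (remaining) $B_j$ I would select a long induced path, and stitch these paths together through appropriate bridging edges to form a single long induced path $L$ while preserving that every $x_i$ retains a neighbor on $L$. After cleaning, this produces an $s'$-constellation $(S',L)$ with $s'$ still large, made ample by discarding stars whose neighborhoods on $L$ collide.

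The substantive work is securing the interrupted property, and here the plan is to use outcome \ref{thm:EEZ_isg}\ref{thm:EEZ_isg_c} of the statement itself as a lever: if the current constellation fails to be interrupted under any enumeration of $S'$, then there are indices $i<j<k$ and an induced path $R$ from $x_i$ to $x_j$ with $R^*\subseteq L$ so that $x_k$ has no neighbor in $R^*$. In that case $R^*$ should act as a separator of $G$, splitting the inherited $K_{N,N}$ substructure carried by the remaining stars into two anticomplete pieces each of treewidth $\geq c$ provided $N$ was chosen large enough in terms of $s$ and $c$, contradicting the hypothesis. Iterating this separator argument and discarding offending stars one at a time, I expect to arrive at an ample interrupted $s$-constellation as an induced subgraph, as desired. (This is the step where the spirit of Lemma~3.2 of \cite{tw16} is being inverted: rather than using interruptedness to produce an anticomplete decomposition, I am using the non-existence of such a decomposition to force interruptedness.)

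The hard part will be quantitative bookkeeping: ampleness, the interrupted property, and the lower bound on the number of stars must all survive simultaneously, yet each pruning step risks damaging structure established at an earlier step. A monotone invariant -- for instance, the length of the longest prefix of the current enumeration witnessing ``interrupted so far,'' together with a carefully calibrated blow-up of $N$ to absorb the cumulative losses at each stage -- seems to be what is needed to close the induction cleanly. Handling the interaction between the separator argument (which removes parts of $L$) and the need to retain a long induced spine is where I anticipate the bulk of the technical difficulty.
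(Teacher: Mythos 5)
Your high-level strategy---apply Theorem~\ref{thm:EEZ_ind_minor} to reduce to the case of an induced $K_{N,N}$-minor, then refine it into one of the other outcomes---is exactly the paper's. But the refinement is where the entire difficulty lies, and your sketch has gaps that are structural rather than quantitative.

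First, the step ``each $A_i$ can be shrunk to a single vertex $x_i$ and $\{x_i\}$ becomes stable'' is not available. In an induced $(N,N)$-model the $A_i$ are connected sets (possibly long paths), and nothing guarantees a single vertex of $A_i$ has a neighbor in many of the $B_j$; the edges $A_i$--$B_j$ may land on $N$ different vertices of $A_i$. Getting from an arbitrary induced model to a model where one side consists of singleton branch sets is itself a substantial problem, and it is essentially what the paper's Section~\ref{sec:half} and the ``alignment'' machinery of Lemma~\ref{lem:half_ind1} are for---but note that even there the paper only produces a $B$-induced model, not a fully induced one with singleton $A$-side.

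Second, and more serious, you propose to ``stitch these paths together through appropriate bridging edges to form a single long induced path $L$.'' In an induced model the branch sets $B_1,\dots,B_N$ are pairwise anticomplete, so there are no bridging edges among them; any path through several $B_j$'s must detour through the $A_i$'s, which are precisely the vertices you want to keep as stars of the constellation. There is no reason to expect $G$ to contain a single long induced path visiting many $B_j$'s while avoiding (yet still adjacent to) the shrunk $A_i$'s. The paper sidesteps this by working with $(s,l)$-constellations---stable sets attached to $l$ \emph{separate} paths---rather than a single spine, and this generality is essential, not cosmetic.

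Third, your separator argument for interruptedness does not close. If some $x_k$ has no neighbor in $R^*$, that says nothing about $G \setminus R^*$ being disconnected: $R^*$ is a short subpath of $L$, the rest of $G$ can freely join the two sides of $L \setminus R^*$, and other stars may have neighbors on both sides. The dichotomy ``interrupted vs.\ anticomplete-split'' is genuinely delicate, and the paper does not prove it from scratch: it invokes Theorem~\ref{thm:motherKtt} from \cite{tw16}, a heavy external result that, given an induced $(f,g)$-model, outputs either a $K_{r,r}$/wall/line-graph-of-wall, an ample interrupted $(s,l)$-constellation, \emph{or} an ample $2r^2$-zigzagged constellation---and a separate Lemma~\ref{lem:eez_zigzag2} is needed to dispatch the zigzagged case by exhibiting the anticomplete split. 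The zigzagged notion captures exactly the ``almost but not quite interrupted'' regime your separator argument was hoping to handle by iterated pruning; your monotone-invariant idea is in the right spirit but does not substitute for that theorem.

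In short: the opening move is correct, but the core technical content of the paper's proof is Theorem~\ref{thm:motherKtt} (quoted from \cite{tw16}) together with Lemma~\ref{lem:eez_zigzag2}, and your proposal neither uses nor reconstructs that machinery.
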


Most of the rest of this paper is devoted to the proof of Theorem~\ref{thm:EEZ_ind_minor}. In the last section, we will derive Theorem~\ref{thm:EEZ_isg} from \ref{thm:EEZ_ind_minor}; this is still far from trivial and relies in particular on the main result of our previous paper in this series \cite{tw16}. In contrast, it is easy to deduce \ref{thm:EEZ_ind_minor} from \ref{thm:EEZ_isg} because, once again, for every $b\in \poi$, every ample interrupted $(2b+1)$-constellation has an induced minor isomorphic to $K_{b,b}$. We also remark that Theorem~\ref{thm:EEZ_isg} provides an alternative proof for one of the two steps comprising the proof of the main result of \cite{tw9}.

\section{Models, strong blocks and an outline of the proof}\label{sec:model}

In this section, we first set up our terminology of ``models'' and ``strong blocks,'' and then give an outline of the proof of Theorem~\ref{thm:half_ind_main}.

Let $G$ be a graph and let $\mu\in \poi$. A \textit{$\mu$-model in $G$} is a $\mu$-tuple $K=(C_1,\ldots, C_{\mu})$ of pairwise disjoint connected induced subgraphs of $G$ such that for all distinct $i,j \in \poi_{\mu}$, the sets $C_i$ and $C_j$ are not anticomplete in $G$. We call $C_1,\ldots, C_{\mu}$ the \textit{branch sets} of $K$, and write 
$$V(K)=\bigcup_{i=1}^{\mu}C_i.$$
We say that $K$ is \emph{linear} if every branch set of $K$ is a path in $G$. 
\medskip

Let $G$ be a graph and let $\rho, \sigma\in \poi$. By a \emph{$(\rho, \sigma)$-model in $G$} we mean a $(\rho+\sigma)$-tuple $M=(A_1,\ldots, A_{\rho}; B_1,\ldots, B_{\sigma})$ of pairwise disjoint connected induced subgraphs of $G$ such that for all $i \in \poi_{\rho}$ and $j \in \poi_{\sigma}$, the sets $A_i$ and $B_j$ are not anticomplete in $G$. We call $A_1, \dots, A_{\rho}, B_1, \dots, B_{\sigma}$ the \emph{branch sets} of $M$.

 Let $M=(A_1,\ldots, A_{\rho}; B_1,\ldots, B_{\sigma})$ be a $(\rho, \sigma)$-model in $G$. We denote by $M^{\T}$ the $(\sigma,\rho)$-model $(B_1,\ldots, B_{\sigma}; A_1,\ldots, A_{\rho})$ in $G$. We say that $M$ is \emph{$A$-induced} if $A_1, \dots, A_{\rho}$ are pairwise anticomplete in $G$, and that $M$ is  \emph{$B$-induced} if $M^{\T}$ is $A$-induced. We say that $M$ is \textit{induced} if $M$ is both $A$-induced and $B$-induced. Note that $G$ has a minor isomorphic to $K_{\rho,\sigma}$ if and only if there is a $(\rho, \sigma)$-model in $G$, and $G$ has an induced minor isomorphic to $K_{\rho,\sigma}$ if and only if there is an induced $(\rho, \sigma)$-model in $G$.  

We say that $M$ is \emph{$A$-linear} if $A_i$ is a path in $G$ for every $i\in \poi_{\rho}$, and we say that $M$ is $B$ is  \emph{$B$-linear} if $M^{\T}$ is $A$-linear. We say that $M$ is \emph{linear} if it is both $A$-linear and $B$-linear. We also define $$A(M)=\bigcup_{i=1}^{\rho}A_i$$ and $$B(M)=\bigcup_{j=1}^{\sigma}B_j.$$
\medskip

Let $a,b,c\in \poi$. An \textit{$(a,b,c)$-candidate} is a graph $G$ such that:
 \begin{itemize}
        \item $G$ is $K_{a}$-free;
        \item there is no induced $(b,b)$-model in $G$;
         \item there are no anticomplete subsets $A,B$ of $V(G)$ with $\tw(A),\tw(B)\geq c$.
        \end{itemize}

Then Theorem~\ref{thm:EEZ_ind_minor} is equivalent to the Theorem~\ref{thm:EEZ_candidate} below. We will prove Theorem~\ref{thm:EEZ_candidate} in Section~\ref{sec:halftofull}.

\begin{restatable}{theorem}{candidate}\label{thm:EEZ_candidate}
    For all $a,b,c\in \poi$, there is a constant $f_{\ref{thm:EEZ_candidate}}=f_{\ref{thm:EEZ_candidate}}(a,b,c)\in \poi$ such that every $(a,b,c)$-candidate has treewidth at most $f_{\ref{thm:EEZ_candidate}}$.
\end{restatable}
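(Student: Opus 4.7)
The plan is to argue by contradiction: take $G$ to be an $(a,b,c)$-candidate of arbitrarily large treewidth and produce two anticomplete subsets of $V(G)$ each inducing a subgraph of treewidth at least $c$, violating the last clause of the candidate definition. First, I would exploit large treewidth via the Grid Theorem (Theorem~\ref{thm:wallminor}): if $\tw(G)$ is sufficiently large in terms of $a,b,c$, then $G$ contains a subdivision of a large wall $W_{r\times r}$, from which one reads off a large linear $(\rho,\sigma)$-model $M=(A_1,\dots,A_\rho;B_1,\dots,B_\sigma)$ in $G$ whose $A$-paths and $B$-paths are the horizontal and vertical paths of the wall, with $\rho,\sigma$ huge in terms of $a,b,c$. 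In general $M$ is linear but neither $A$-induced nor $B$-induced, because $G$ can contribute chords to the wall.

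Second, I would use the $K_a$-free hypothesis, combined with iterated Ramsey-type arguments, to purge the chords between distinct $A$-paths: for each pair $(A_i,A_{i'})$, either we find a $K_a$ (a contradiction) or we can pass to long sub-paths whose interaction is anticomplete; by a product Ramsey argument over all such pairs and a compatible thinning of the $B$-paths, we obtain a large linear $A$-induced sub-model of $M$ with dimensions still large in terms of $b$ and $c$. This is the kind of reduction that the strong-block machinery alluded to in Section~\ref{sec:model}, and developed in the preceding papers of this series, is designed to perform.

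Third, I would attempt to repeat this cleanup on the $B$-side. If that also succeeds and produces an induced sub-model of dimensions at least $(b,b)$, we contradict clause~(2) of the candidate definition directly. Otherwise the obstruction to $B$-side cleanup is itself rigid, forcing the $B$-paths to ``bunch'' together in a controlled manner. Combining this controlled bunching with the $A$-induced structure, I would extract a large induced subdivided wall (or a closely related wall-like induced subgraph) of $G$, and then split it into two disjoint half-walls $W_1,W_2$. Because the extracted wall is induced, the vertex sets of $W_1$ and $W_2$ are anticomplete in $G$; because each half-wall is itself a subdivided wall of about half the size, each has treewidth at least $c$ provided the initial parameters were chosen large enough. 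This yields the desired contradiction.

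The main obstacle is Step 3: the $A$-induced cleanup does not in general lift to a simultaneous two-sided cleanup, and controlling the bunching of $B$-paths so that it can still be split into two large half-walls requires both the main theorem of \cite{tw16} and careful accounting within the strong-block formalism. The quantitative bounds $f_{\ref{thm:EEZ_candidate}}(a,b,c)$ will cascade rapidly through the layered Ramsey applications, which is the price of this strategy; it also explains why the induced-minor formulation, with its extra freedom to contract, is essential in clause~\ref{thm:EEZ_ind_minor_b} of Theorem~\ref{thm:EEZ_ind_minor}.
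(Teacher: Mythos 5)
Your high-level architecture (large treewidth $\Rightarrow$ linear model $\Rightarrow$ purge the $A$-side via Ramsey and $K_a$-freeness $\Rightarrow$ deal with the $B$-side) matches the paper's, but the endgame in your Step~3 is a genuine gap, and two side points are off.

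The paper's endgame is different and does not go through a wall. After obtaining a linear $A$-induced $(\theta,\gamma)$-model $M$ (via Theorem~\ref{thm:comp_model} and Theorem~\ref{thm:half_ind_main}), the paper does not try to simultaneously purge the $B$-side nor to extract an induced wall from the residual structure. Instead it (i) discards the few $B$-paths containing a vertex adjacent to $\geq b$ of the $A$-paths (such vertices plus their $A$-neighbourhoods would give an induced $(b,b)$-model, using $K_a$-freeness and Ramsey to get a stable set of them); (ii) applies Lemma~\ref{lem:half_ind1} to each surviving $B$-path to obtain an $A$-aligned $(\alpha,1)$-model, then uses pigeonhole so that many $B$-paths are aligned with the \emph{same} $\alpha$-subset of $A$-paths; (iii) splits each such $B$-path along the alignment into $\beta$ disjoint segments, producing $\beta$ linear $(c,c)$-models $M_1,\dots,M_\beta$ that are pairwise anticomplete on the $A$-side by construction; and (iv) colours pairs $\{j,j'\}$ by the set of $(l,l')$ with $P^l_j$ not anticomplete to $P^{l'}_{j'}$ and applies Theorem~\ref{thm:multiramsey}. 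If the resulting colour class is nonempty, the offending segments themselves form the branch sets of an \emph{induced} $(b,b)$-model, contradicting the candidate hypothesis; if it is empty, two of the $(c,c)$-models $M_{j_1}$, $M_{j'_1}$ are fully anticomplete, and each has a $K_{c,c}$ minor, hence treewidth $\geq c$, again a contradiction. Your proposal to ``extract a large induced subdivided wall (or a closely related wall-like induced subgraph)'' from the obstruction to $B$-side cleanup is not justified and, as far as I can see, false: the controlled bunching one gets from alignment has no reason to produce the alternating row/column incidence pattern of a wall, and the set of $B$-paths that resist cleanup may all attach to the same small portion of the $A$-side. The concrete replacement is: the high-treewidth anticomplete parts you want are $K_{c,c}$-minor models built from aligned segments, not half-walls.

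Two further corrections. First, the proof of Theorem~\ref{thm:EEZ_candidate} does not use the main result of~\cite{tw16} at all; that is only invoked later, in Section~\ref{sec:isg}, to upgrade Theorem~\ref{thm:EEZ_candidate} to Theorem~\ref{thm:EEZ_isg}. You should not need it here. Second, the paper's starting point is not the Grid Theorem but the strong-block theorem (Theorem~\ref{thm:noblock}), which furnishes a strong $(k,l)$-block; one then runs Ramsey on the paths within and between pairs to obtain a linear $\mu$-model (a \emph{complete} model, with all branch sets pairwise non-anticomplete), and it is this complete model that Theorem~\ref{thm:half_ind_main} needs as input. A subdivided wall does not directly hand you a complete model with path branch sets: the horizontal paths of a wall are pairwise anticomplete except for adjacent rows, so the ``read off a $(\rho,\sigma)$-model from the wall'' step also needs more care than you indicate, even just to get disjoint branch sets.
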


Let $G$ be a graph. For a set $\mca{X}$ of subsets of $V(G)$, we write $V(\mca{X})=\bigcup_{X\in \mca{X}}X$. For $k,l\in \poi$, a \textit{$(k,l)$-block} in $G$ is a pair $(B, \mca{P})$ where $B\subseteq V(G)$ with $|B|\geq k$ and $\mca{P}:{B\choose 2}\rightarrow 2^{V(G)}$ is map such that $\mca{P}_{\{x,y\}}=\mca{P}(\{x,y\})$, for each $2$-subset $\{x,y\}$ of $B$, is a set of at least $l$ pairwise internally disjoint paths in $G$ from $x$ to $y$. We say that $(B,\mca{P})$ is \textit{strong} if for all distinct $2$-subsets $\{x,y\}, \{x',y'\}$ of $B$, we have $V(\mca{P}_{\{x,y\}})\cap V(\mca{P}_{\{x',y'\}})=\{x,y\}\cap\{x',y'\}$; that is, each path $P\in \mca{P}_{\{x,y\}}$ is disjoint from each path $P'\in \mca{P}_{\{x',y'\}}$, except when $P$ and $P'$ may share an end.

Let $t\in \poi$. We say that a graph $G$ is \textit{$t$-clean} if $G$ has no induced subgraph isomorphic to any of the following: $K_{t+1}$, $K_{t,t}$, a subdivision of $W_{t\times t}$, or the line graph of a subdivision of $W_{t\times t}$. The following was proved in \cite{tw7}.

\begin{theorem}[Abrishami, Alecu, Chudnovsky, Hajebi, Spirkl \cite{tw7}]\label{thm:noblock}
For all $k,l,t\in \poi$, there is a constant $f_{\ref{thm:noblock}}=f_{\ref{thm:noblock}}(k,l,t)\in \poi$ such that for every $t$-clean graph $G$ with $\tw(G)>f_{\ref{thm:noblock}}$,  there is a strong $(k,l)$-block in $G$.
\end{theorem}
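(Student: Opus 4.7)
My plan is to combine the Grid Theorem (Theorem~\ref{thm:wallminor}) with the $t$-cleanness hypothesis to extract a highly regular wall-like structure inside $G$, and then to route paths within it to realize the strong $(k,l)$-block. First, choosing $r$ to be a tower function of $k$, $l$, and $t$, Theorem~\ref{thm:wallminor} yields a subgraph $H\subseteq G$ isomorphic to a subdivision of $W_{r\times r}$. Since $G$ contains no induced subdivision of $W_{t\times t}$, and no induced line graph of such a subdivision (both forbidden by $t$-cleanness), once $r$ is much larger than $t$ the induced subgraph $G[V(H)]$ cannot be a bare wall subdivision: it must carry many ``chord'' adjacencies between distinct branch paths of $H$. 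These extra adjacencies are what will eventually supply the multiplicity $l$ of internally disjoint paths required by the block.

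The second phase is a regularization step. Using iterated Ramsey-type extractions---leveraging that $G$ is $K_{t+1}$-free and $K_{t,t}$-free---I would pass to a sub-wall $H'\subseteq H$ on which the chord pattern between branch paths becomes uniform, in the spirit of ``cleaning'' an unwieldy wall into one whose induced structure is controlled. The forbidden induced line graph of a large subdivided wall is exactly the configuration that would obstruct converting edge-disjoint wall paths into vertex-disjoint paths in $G$, so its absence is precisely what allows us to harvest vertex-disjoint paths from the chord structure of the wall.

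With a regular sub-wall $H'$ in hand, the third phase is a routing argument. Pick $k$ branch vertices $b_1,\ldots,b_k$ of $H'$ that are pairwise far apart in the wall metric, and partition a large portion of $H'$ into $\binom{k}{2}$ disjoint ``corridors,'' one per pair $\{b_i,b_j\}$, each connecting the appropriate pair. In each corridor, Menger's theorem applied inside $G$ (using the local connectivity inherited from the sub-wall together with the regularized chord edges) produces $l$ internally vertex-disjoint $b_i$-$b_j$ paths. Because distinct corridors are vertex-disjoint, paths serving distinct pairs share no internal vertices, and the ``strong'' condition is automatic.

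The principal obstacle, and the source of most of the subtlety, is reconciling two competing tensions: the Ramsey extractions shrink the wall dramatically, while the routing phase demands many large disjoint sub-regions, and the post-cleaning chord structure must still be rich enough to furnish $l$ internally disjoint paths inside each corridor. Balancing these requirements forces $r$ to be enormous (iteratively exponential in $k$, $l$, and $t$), and the line-graph clause of $t$-cleanness must be invoked at precisely the step where vertex-disjointness rather than merely edge-disjointness is needed; without it, the chord structure after cleaning could conspire to force the $l$ local paths in one corridor to meet those in another. Making this corridor decomposition work simultaneously for all $\binom{k}{2}$ pairs, while keeping the parameter losses under control, is what I expect to be the hardest aspect of the proof.
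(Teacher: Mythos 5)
The paper does not prove Theorem~\ref{thm:noblock}; it imports it as a black box from \cite{tw7}, where it is essentially the main result of a whole paper. So there is no in-paper argument to compare against, and your sketch has to stand on its own.

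As a plan it has a concrete gap at the routing step. A subdivision of $W_{r\times r}$ is subcubic, so between any two of its vertices there are at most three internally vertex-disjoint paths, no matter how large $r$ is (and the line graph of a subdivision of $W_{r\times r}$ has maximum degree four, with the same consequence). The wall $H$ by itself can therefore never supply $l$ internally disjoint $b_i$--$b_j$ paths once $l\geq 4$; everything hinges on the chord edges boosting the local connectivity inside each corridor to at least $l$, and your sketch does not argue this. The observation that $G[V(H)]$ must carry chords (else $G$ would contain a large induced subdivided wall) only yields that deleting a bounded fraction of the branch paths cannot remove all chords, i.e.\ a number of chords roughly linear in $r$; that is far smaller than the $\Theta(r^2)$ pairwise-disjoint corridors you wish to service, and nothing prevents the chords from being concentrated on a few rows or a few branch paths, leaving most corridors chordless and hence $3$-connected at best. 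The Ramsey regularization also cuts the wrong way: forbidding $K_{t+1}$ and $K_{t,t}$ caps the chord density from above but provides no lower bound, so a ``uniform'' chord pattern after cleaning could simply be empty on the sub-wall you retain. Finally, the assertion that excluding the line graph of a subdivided wall is ``exactly the configuration that would obstruct converting edge-disjoint wall paths into vertex-disjoint paths'' is not substantiated, and since line graphs of walls are themselves bounded-degree it is implausible that this exclusion is what manufactures $l$-fold local connectivity. The heart of the theorem is precisely to produce high local connectivity where the subdivided wall has essentially none, and the sketch as written does not engage with that difficulty.
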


We conclude this section with a sketch of the proof of Theorem~\ref{thm:EEZ_candidate}, as follows. Using Theorem~\ref{thm:noblock}, we deduce that our graph contains a strong $(k, l)$-block for large $k,l$. If this block is ``induced,'' then we have an induced subdivision of a large complete graph, and we are done. There are four kinds of edges that might make a strong block non-induced: In Section \ref{sec:comp}, we use Ramsey-type methods to show that we can restrict our attention to only those edges that are between paths; this implies the presence of a linear $\mu$-model $K^0$ (for some large $\mu$ depending on $k$ and $l$). 

In Section \ref{sec:half}, our goal is to extract from $K^0$ a large half-induced complete bipartite induced minor model. We proceed in two steps: first, we arrange that each vertex in $V(K^0)$ has neighbors in only a few other branch sets (or we find a model in which branch sets on one side are singletons, and so we can make it half-induced using Ramsey's theorem). Second, we use ideas similar to ``alignments'' from previous papers in this series \cite{tw9,tw11,tw12}: For a given branch set $P$, if the other sets attach to it in a simple way (which we call ``aligned''), we use this to control how $P$ attaches to other branch sets. Otherwise, we split $P$ into multiple paths, using these as the ``$B$-side'' of the half-induced model we were looking for in the first place. 

In Section \ref{sec:halftofull} we again use the idea of alignments. Roughly speaking, if the paths on the $A$-side have few edges between them, we get two anticomplete induced subgraphs of large treewidth; if there are many edges, we split two paths into multiple segments, obtaining an induced complete bipartite minor model consisting of these branch sets.  This will complete the proof of Theorem~\ref{thm:EEZ_candidate}.

\section{Complete models} \label{sec:comp}
Our main result in this section is the following:

\begin{restatable}{theorem}{compmodel}\label{thm:comp_model}
     For all $a,b,c,\mu\in \poi$, there is a constant $f_{\ref{thm:comp_model}}=f_{\ref{thm:comp_model}}(a,b,c,\mu)\in \poi$ such for every $(a,b,c)$-candidate $G$ with $\tw(G)>f_{\ref{thm:comp_model}}$, there is a linear $\mu$-model in $G$.
\end{restatable}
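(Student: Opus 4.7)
The plan is a three-phase strategy following the outline in Section~\ref{sec:model}. First, I would show that every $(a,b,c)$-candidate $G$ is $t$-clean for a suitable $t=t(a,b,c)\in\poi$: the forbiddenness of $K_{t+1}$ and of $K_{t,t}$ as induced subgraphs of $G$ is immediate from the candidate definition with $t\geq\max(a-1,b)$, while induced subdivisions of $W_{t\times t}$ and induced line graphs of such subdivisions are ruled out because, for $t$ large enough in terms of $c$, any such graph contains two anticomplete induced subgraphs each of treewidth at least $c$ (e.g.\ by separating the wall along a middle column). Armed with $t$-cleanness, I apply Theorem~\ref{thm:noblock} with $k$ and $l$ chosen very large in terms of $\mu,a,b,c$ to obtain a strong $(k,l)$-block $(B,\mca{P})$ in $G$.

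For each $2$-subset $\{x,y\}\subseteq B$ I fix one path $P_{\{x,y\}}\in\mca{P}_{\{x,y\}}$ and assemble the skeleton $T=\bigcup_{\{x,y\}\subseteq B}P_{\{x,y\}}$. The strong block structure ensures that the $P_{\{x,y\}}$ are pairwise internally disjoint and each is an induced path in $G$, so $T$ is, as an abstract graph, a subdivision of $K_{|B|}$ inside $G$. Any $\mu$ pairwise disjoint $2$-subsets of $B$ give $\mu$ pairwise vertex-disjoint paths among the $P_{\{x,y\}}$, so the only missing ingredient for a linear $\mu$-model is pairwise non-anticompleteness of the selected paths, which must come from genuine \emph{extra} edges of $G$ inside $T$ (edges joining two different $P$'s).

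The central step is a Ramsey-type reduction on these extra edges. For each $4$-subset $\{x,y,z,w\}\subseteq B$ partitioned into disjoint $2$-subsets $\{x,y\},\{z,w\}$, I record whether $P_{\{x,y\}}$ and $P_{\{z,w\}}$ are non-anticomplete in $G$; I run an analogous Ramsey step for each of the other extra-edge types (a $B$-vertex adjacent to the interior of a non-incident path, a direct $B$-$B$ edge missing from the skeleton, and an edge between two paths sharing an endpoint). An iterated application of Ramsey's theorem on $B$ produces a still-large $B'\subseteq B$ on which all these colorings are monochromatic. In the ``dense'' case---many edges between $P_{\{x,y\}}$ and $P_{\{z,w\}}$ for all disjoint pairs from $B'$---choosing any $\mu$ pairwise disjoint $2$-subsets of $B'$ yields the required linear $\mu$-model. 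In the ``sparse'' case, after all extra-edge types have been uniformly eliminated, the union $\bigcup_{\{x,y\}\subseteq B'}P_{\{x,y\}}$ is an induced subdivision of $K_{|B'|}$ in $G$ and hence, for $|B'|\geq r^2$, contains an induced subdivision of $W_{r\times r}$; for $r$ large in terms of $c$ this contradicts the $t$-cleanness established in Phase~1. The main obstacle is exactly this bookkeeping: each of the several extra-edge types must be uniformized simultaneously without letting the others spoil the argument, and the Ramsey losses must be budgeted up front in the choice of $k$ and $l$.
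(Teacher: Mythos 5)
The overall architecture you propose (pass to $t$-cleanness, extract a strong $(k,l)$-block via Theorem~\ref{thm:noblock}, then uniformize the ``extra'' edges by iterated Ramsey arguments) is close in spirit to the paper's, and your handling of the disjoint-pair dense case and the clean sparse case is fine in outline. But there is a genuine gap in the middle: your dense/sparse dichotomy does not exhaust the possibilities, and in fact the most important intermediate case is never addressed. Consider the coloring on $3$-subsets $\{x,y,z\}$ of $B'$ that records which of $x,y,z$ has a neighbor in the interior of the path joining the other two (your ``$B$-vertex adjacent to a non-incident path'' type). If this coloring comes out dense --- say, for all $x<y<z$ the vertex $z$ has a neighbor in $P_{\{x,y\}}^*$ --- you do \emph{not} get a linear $\mu$-model: the branch sets you can naturally form (singleton $B$-vertices on one side, path interiors on the other) are a bipartite pattern, pairwise anticomplete within each side, so what you actually obtain is an induced $(b,b)$-model. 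The same happens for the ``paths sharing an endpoint'' type (e.g.\ $P_{\{a,b\}}^*$ meets $P_{\{b,c\}}^*$ for all $a<b<c$): fixing the shared vertex and splitting the remaining indices into two intervals again produces an induced $(b,b)$-model, not a linear $\mu$-model. These are exactly the cases that must be ruled out via the $(a,b,c)$-candidate hypothesis that $G$ has no induced $(b,b)$-model --- an assumption your proposal never invokes. In the paper this is the entire reason Theorem~\ref{thm:comp_model_general} carries the separate outcome~\ref{thm:comp_model_general_b} (a linear $(\rho,\sigma)$-model), which in the proof of Lemma~\ref{lem:comp_model_rigid} is what the product-Ramsey and the $3$-subset Ramsey dense cases yield, and which is then eliminated in the proof of Theorem~\ref{thm:comp_model} precisely because a candidate has no induced $(b,b)$-model.

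Two smaller points. First, the sparse-case conclusion ``for $|B'|\geq r^2$, contains an induced subdivision of $W_{r\times r}$'' is not obviously true and is unnecessary: the paper instead observes directly that an induced proper subdivision of $K_{2c+2}$ already contains two anticomplete induced subgraphs of treewidth at least $c$ (proper subdivisions of $K_{c+1}$), which contradicts the third candidate condition without any detour through walls. (To guarantee properness, you do need $B'$ to be stable, which your $B$--$B$ Ramsey step plus $K_a$-freeness supplies.) Second, the paper's source of a linear $\mu$-model is different from yours: it comes from applying classical Ramsey \emph{within} a single $\mca{P}_{\{x,y\}}$ (statement~\eqref{st:antiint1}), using the multiplicity $l$ of the block, rather than from non-anticompleteness across disjoint pairs. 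Your cross-pair source also works once the $4$-subset coloring is set up carefully (there are three partitions of a $4$-set into two pairs, so the color must record all three bits), but the within-pair route lets the paper skip that bookkeeping.
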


We will deduce Theorem~\ref{thm:comp_model} from the following more general result. (Recall that a \textit{proper} subdivision is one in which every edge is subdivided at least once.)

\begin{theorem}\label{thm:comp_model_general}
     For all $s,t,\mu, \rho,\sigma\in \poi$, there is a constant $f_{\ref{thm:comp_model_general}}=f_{\ref{thm:comp_model_general}}(s,t,\mu, \rho,\sigma)\in \poi$ such that for every $t$-clean graph $G$ with $\tw(G)>f_{\ref{thm:comp_model_general}}$, one of the following holds.
\begin{enumerate}[{\rm (a)}]
 \item\label{thm:comp_model_general_a}  There is an induced subgraph of $G$ isomorphic to a proper subdivision of $K_s$.
        \item \label{thm:comp_model_general_b} There is a linear induced $(\rho,\sigma)$-model in $G$. 
          \item\label{thm:comp_model_general_c}  There is a linear $\mu$-model in $G$.
\end{enumerate}
\end{theorem}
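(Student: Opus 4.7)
The plan is to use Theorem~\ref{thm:noblock} as input: choose $k, l$ sufficiently large in terms of $s, t, \mu, \rho, \sigma$ and Ramsey numbers, and set $f_{\ref{thm:comp_model_general}} = f_{\ref{thm:noblock}}(k, l, t)$. Then any $t$-clean graph $G$ with $\tw(G) > f_{\ref{thm:comp_model_general}}$ contains a strong $(k, l)$-block $(B, \mca{P})$. The problem reduces to extracting one of the outcomes (a), (b), (c) from $(B, \mca{P})$ via a sequence of Ramsey-type reductions on the ``extra'' edges of $G$ incident to the block. These come in three flavors: (i) edges between interiors of two paths in the same book $\mca{P}_{\{x,y\}}$; (ii) edges from an interior vertex of some $P\in\mca{P}_{\{x,y\}}$ to a branch vertex $z\in B\setminus\{x,y\}$; and (iii) edges between interiors of paths in different books. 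Edges inside $B$ are handled up front by passing to a stable subset of $B$ of desired size, using $K_{t+1}$-freeness of $G$ via classical Ramsey.

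I would first handle type (i) edges. For each pair $\{x,y\}\in\binom{B}{2}$, define an auxiliary graph $H_{\{x,y\}}$ on $\mca{P}_{\{x,y\}}$ by joining two paths whenever their interiors are not anticomplete in $G$. By Ramsey (for $l$ sufficiently large), either $H_{\{x,y\}}$ contains a clique of size $\mu$ or a large independent set. In the former case, the $\mu$ path-interiors are pairwise disjoint (being in the same book) and pairwise non-anticomplete, so they form a linear $\mu$-model, yielding outcome (c). Otherwise, we replace $\mca{P}_{\{x,y\}}$ by a sub-book of pairwise internally anticomplete paths.

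I would then handle type (ii) edges. For each foreign $z\in B\setminus\{x,y\}$, count how many paths of the current $\mca{P}_{\{x,y\}}$ have $z$ as a neighbor of some interior vertex; if this count is large, then $z$ together with those parallel paths forms a theta-like substructure which, combined across books, should force one of the three outcomes---either (b) (with $z$ or a small set of such foreign vertices as one side and path-segments as the other), (c), or a contradiction to $t$-cleanness via a subdivided $W_{t\times t}$, its line graph, or a $K_{t,t}$. After pigeonholing over $z$ and further restricting to sub-books, we reach a state in which only type (iii) extra edges remain.

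Finally, I would tackle type (iii) by picking one representative $P_{\{x,y\}}$ per cleaned sub-book and Ramsey-coloring pairs of pairs by whether the two representatives are anticomplete outside any shared endpoint. The ``anticomplete'' color now yields outcome (a), an induced proper subdivision of $K_s$, since all other extra edge types have already been cleaned; the ``non-anticomplete'' color, after restricting to a matching in $B$ to guarantee pairwise disjointness of the selected paths, yields outcome (c); and a bipartite variant, splitting $B$ into two halves and coloring only crossings, yields outcome (b). The principal obstacle is the type (ii) step: a single foreign branch vertex with many neighbors spread across many parallel paths creates an ambiguous substructure whose analysis requires the full strength of $t$-cleanness to rule out subdivided walls and their line graphs, and careful case work to extract the desired model or contradiction. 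A secondary challenge is coordinating the parameters $k$ and $l$ through the chain of Ramsey reductions so that the final sub-block is still large enough to witness outcomes with the target parameters $s, \mu, \rho, \sigma$.
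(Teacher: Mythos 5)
Your overall framework matches the paper's: invoke Theorem~\ref{thm:noblock} to get a strong $(k,l)$-block, then Ramsey-clean the four kinds of non-induced edges, with type~(i) (within-book interior edges) handled exactly as you describe, via an auxiliary graph per book whose clique side gives a linear $\mu$-model and whose stable side gives a sub-book with pairwise anticomplete interiors. Edges inside $B$ are likewise handled up front via $K_{t+1}$-freeness and classical Ramsey, as you say.

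The genuine gap is in your types~(ii) and~(iii), and it is centered on the order in which you attack them. The paper cleans type~(iii) \emph{before} type~(ii), using the product Ramsey theorem (Theorem~\ref{thm:productramsey}) to pick a single representative $Q_{\{x,y\}}$ per book so that all chosen interiors $Q^*_{\{x,y\}}$ are \emph{pairwise} anticomplete across books; if any color class is nonempty, the pairwise within-book anticompleteness from step~(i) makes the offending interiors into a linear $(b,b)$-model with $b=\max\{\rho,\sigma\}$, giving outcome~(b) directly. That pairwise anticompleteness is then what makes the type~(ii) step work: applying Theorem~\ref{thm:multiramsey} to $3$-subsets of a stable $S\subseteq B$ (coloring $\{t_1,t_2,t_3\}$ by which of the three branch vertices sees the interior of the opposite representative) either yields the desired $S'$ with no type~(ii) edges—so $\bigcup Q_{\{x,y\}}$ is a proper induced subdivision of $K_s$—or gives a nonempty color, and then one finds $b$ singletons $\{x_{i_1}\},\dots,\{x_{i_b}\}$ each attaching to $b$ pairwise anticomplete interiors $Q^*_{j_l,k_l}$, which is again a linear $(b,b)$-model. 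Your plan does~(ii) first and then~(iii); without the cross-book anticompleteness in hand, the ``foreign vertex sees many paths'' case cannot be turned into a linear $(\rho,\sigma)$-model in this simple way, and you are left appealing to the full strength of $t$-cleanness and ``careful case work''---the obstacle you yourself flag. In fact the paper's key Lemma~\ref{lem:comp_model_rigid} uses only $K_{t+1}$-freeness (to extract a stable $\kappa$-subset of $B$); $t$-cleanness is needed only to invoke Theorem~\ref{thm:noblock} at the top level, so your worry that type~(ii) ``requires the full strength of $t$-cleanness'' is misplaced and signals that the argument as sketched would not close. Reordering to handle~(iii) first via product Ramsey, then~(ii) via Ramsey on triples, is the missing idea.
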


The proof relies on Theorem~\ref{thm:noblock} and two versions of Ramsey's theorem:

\begin{theorem}[Ramsey \cite{multiramsey}]\label{thm:multiramsey}
For all $l,m,n\in \poi$, there is a constant $f_{\ref{thm:multiramsey}}=f_{\ref{thm:multiramsey}}(l,m,n)\in \poi$ with the following property. Let $U$ be a set of cardinality at least $f_{\ref{thm:multiramsey}}$ and let $F$ be a non-empty set of cardinality at most $l$. Let $\Phi:\binom{U}{m}\rightarrow F$ be a map. Then there exist $i\in F$ and an $n$-subset $Z$ of $U$ such that $\Phi(X)=i$ for all $X\in \binom{Z}{m}$.
\end{theorem}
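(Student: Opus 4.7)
The statement to prove is the classical multicolor hypergraph Ramsey theorem, and the plan is to proceed by induction on the uniformity parameter $m$. The base case $m=1$ is immediate: the map $\Phi:\binom{U}{1}\to F$ is just an $l$-coloring of the elements of $U$, and the pigeonhole principle produces a monochromatic $n$-subset provided $|U|\geq l(n-1)+1$, so I would set $f_{\ref{thm:multiramsey}}(l,1,n)=l(n-1)+1$.

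For the inductive step with $m\geq 2$, assuming the theorem for $m-1$ and all other parameters, the plan is a greedy selection argument. Starting from $U_0=U$, I would iteratively construct a sequence of elements $u_1,u_2,\ldots,u_N$ together with a decreasing chain of subsets $U_0\supseteq U_1\supseteq\cdots\supseteq U_N$ and colors $c_1,\ldots,c_N\in F$, as follows: at step $i$, pick $u_i\in U_{i-1}$ arbitrarily and define $\Phi_i:\binom{U_{i-1}\setminus\{u_i\}}{m-1}\to F$ by $\Phi_i(X)=\Phi(\{u_i\}\cup X)$. The inductive hypothesis applied to $\Phi_i$, with an appropriately chosen target size, then produces a color $c_i\in F$ and a subset $U_i\subseteq U_{i-1}\setminus\{u_i\}$ of that target size on which $\Phi_i\equiv c_i$. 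The target sizes of $U_N,U_{N-1},\ldots,U_0$ are fixed in reverse so that each application of the inductive hypothesis succeeds, and setting $|U_0|=f_{\ref{thm:multiramsey}}(l,m,n)$ then determines the constant by unrolling the (tower-type) recursion.

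After completing $N:=l(n-1)+1$ iterations, a second pigeonhole on the colors $c_1,\ldots,c_N\in F$ yields indices $i_1<i_2<\cdots<i_n$ sharing a common color $c\in F$, and I set $Z:=\{u_{i_1},\ldots,u_{i_n}\}$. The final check is that $\Phi\equiv c$ on $\binom{Z}{m}$: for any $m$-subset $\{u_{i_{j_1}},\ldots,u_{i_{j_m}}\}$ with $j_1<\cdots<j_m$, the nesting of the $U_i$'s ensures $u_{i_{j_2}},\ldots,u_{i_{j_m}}\in U_{i_{j_1}}$, and therefore $\Phi(\{u_{i_{j_1}},u_{i_{j_2}},\ldots,u_{i_{j_m}}\})=\Phi_{i_{j_1}}(\{u_{i_{j_2}},\ldots,u_{i_{j_m}}\})=c_{i_{j_1}}=c$. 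The only real obstacle is notational: one must specify the target sizes of the $U_i$ in the right order and keep the resulting tower-type bound on $f_{\ref{thm:multiramsey}}$ under control, but since the theorem is classical and no sharp quantitative bound is needed here, any crude recursive estimate suffices.
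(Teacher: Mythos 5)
Your proof is correct, and it is the standard textbook proof of the multicolor hypergraph Ramsey theorem: induction on the uniformity $m$, greedy extraction of a sequence $u_1,\ldots,u_N$ with nested ``good'' sets $U_0\supseteq U_1\supseteq\cdots$, one application of the $(m-1)$-uniform case per step to fix the color of all $m$-sets whose smallest chosen element is $u_i$, and a final pigeonhole on the colors $c_1,\ldots,c_N$ with $N=l(n-1)+1$. The nesting check at the end (that for $j_1<\cdots<j_m$ one has $u_{i_{j_2}},\ldots,u_{i_{j_m}}\in U_{i_{j_1}}$, hence $\Phi(\{u_{i_{j_1}},\ldots,u_{i_{j_m}}\})=c_{i_{j_1}}=c$) is exactly right, and the backward recursion $n_{i-1}=f_{\ref{thm:multiramsey}}(l,m-1,n_i)+1$, $n_N$ arbitrary small, gives a valid (tower-type) bound for $f_{\ref{thm:multiramsey}}(l,m,n)=n_0$.

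There is no proof of this statement in the paper to compare against: Theorem~\ref{thm:multiramsey} is cited directly to Ramsey's original paper as a known classical result, and the authors make no attempt to reprove it. So the only substantive comparison I can offer is that your argument is the canonical one found in standard references and would be accepted without comment. One very minor point: you may want to make explicit that the inductive hypothesis is being invoked with the same color set $F$ (so $l$ is unchanged), and that the parameter passed as the third argument is the cardinality you want $U_i$ to have, so that the recursion on the $n_i$ is well-posed; but this is a bookkeeping remark, not a gap.
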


\begin{theorem}[Graham, Rothschild and Spencer \cite{productramsey}]\label{thm:productramsey}
For all  $n,q,r\in \poi$, there is a constant  $f_{\ref{thm:productramsey}}=f_{\ref{thm:productramsey}}(n,q,r)\in \poi$ with the following property. Let $U_1,\ldots, U_n$ be $n$ sets, each of cardinality at least $f_{\ref{thm:productramsey}}$ and let $W$ be a non-empty set of cardinality at most $r$. Let $\Phi$ be a map from the Cartesian product $U_1\times \cdots \times U_n$ to $W$. Then there exist $i\in W$ and a $q$-subset $Z_j$ of $U_j$ for each $j\in \poi_{n}$, such that for every $z\in Z_1\times \cdots\times Z_n$, we have $\Phi(z)=i$.
\end{theorem}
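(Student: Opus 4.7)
The plan is to prove Theorem~\ref{thm:productramsey} by induction on the number of factors $n$, with the base case handled by pigeonhole and the inductive step handled by a second application of pigeonhole to ``certificates'' produced by the inductive hypothesis.

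For $n=1$, the statement asserts that any map $\Phi\colon U_1\to W$ with $|W|\leq r$ and $|U_1|\geq qr$ admits a monochromatic $q$-subset, which is immediate; so $f_{\ref{thm:productramsey}}(1,q,r)=qr$ suffices. For the inductive step, set $f_{n-1}:=f_{\ref{thm:productramsey}}(n-1,q,r)$. Given sets $U_1,\ldots,U_n$ and a coloring $\Phi\colon U_1\times\cdots\times U_n\to W$, we may first pass to subsets of size exactly $f_{n-1}$ in each of $U_1,\ldots,U_{n-1}$ (possible as soon as each $|U_i|\geq f_{n-1}$) and rename them $U_1,\ldots,U_{n-1}$. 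Treat $U_n$ as an index set for ``slices'': for each $u\in U_n$, define the restricted coloring $\Phi_u\colon U_1\times\cdots\times U_{n-1}\to W$ by $\Phi_u(x_1,\ldots,x_{n-1})=\Phi(x_1,\ldots,x_{n-1},u)$. Applied to $\Phi_u$, the inductive hypothesis produces a \emph{certificate} $C(u)=(Z_1(u),\ldots,Z_{n-1}(u),w(u))$ with $Z_i(u)\in\binom{U_i}{q}$ and $w(u)\in W$, such that $\Phi_u$ is constantly equal to $w(u)$ on $Z_1(u)\times\cdots\times Z_{n-1}(u)$.

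The map $u\mapsto C(u)$ takes values in a set of size at most $N:=r\cdot\binom{f_{n-1}}{q}^{n-1}$, a constant depending only on $n$, $q$, $r$. By pigeonhole, if $|U_n|\geq qN$ we can extract a $q$-subset $Z_n\subseteq U_n$ on which $C$ is constant, say equal to $(Z_1,\ldots,Z_{n-1},w)$. Then by construction $\Phi\equiv w$ on $Z_1\times\cdots\times Z_{n-1}\times\{u\}$ for every $u\in Z_n$, hence $\Phi\equiv w$ on the full product $Z_1\times\cdots\times Z_n$, which is the desired monochromatic sub-box. Setting $f_{\ref{thm:productramsey}}(n,q,r):=\max\{f_{n-1},\,qN\}$ closes the induction.

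There is no substantive obstacle: the whole argument is nested pigeonhole, using the inductive hypothesis to code each ``slice'' by a finite certificate and then pigeonholing the slices. The only point worth flagging is that the resulting recursion produces a tower-type quantitative bound in $n$, which is typical for Ramsey-type results and perfectly adequate for the paper's applications, since only the existence of the constant $f_{\ref{thm:productramsey}}$ is invoked.
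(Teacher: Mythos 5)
The paper cites this as a known result of Graham, Rothschild and Spencer and gives no proof of its own, so there is nothing to compare against. Your argument is correct: the base case $n=1$ is immediate from pigeonhole, and the inductive step—treating $U_n$ as indexing ``slices,'' encoding each slice by the certificate $(Z_1(u),\ldots,Z_{n-1}(u),w(u))$ produced by the inductive hypothesis, and pigeonholing the at most $r\binom{f_{n-1}}{q}^{n-1}$ possible certificates over a sufficiently large $U_n$—is the standard proof of the product Ramsey theorem, and all the quantitative bookkeeping ($f(n,q,r)=\max\{f_{n-1},qN\}$) checks out.
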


It is convenient to have an explicit bound in Theorem~\ref{thm:multiramsey} where $l=m=2$:

\begin{theorem}[Ramsey \cite{multiramsey}]\label{thm:classicalramsey}
For all $c,s\in \poi$, every graph on at least $c^s$ vertices has either a clique of cardinality $c$ or a stable set of cardinality $s$.
\end{theorem}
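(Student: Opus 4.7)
The plan is to prove Theorem~\ref{thm:classicalramsey} by induction on $c+s$. The base cases $c=1$ and $s=1$ are immediate: in any non-empty graph, a single vertex is simultaneously a clique of cardinality $1$ and a stable set of cardinality $1$, so the hypothesis $|V(G)|\geq c^s=1$ already yields the conclusion.

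For the inductive step, suppose $c,s\geq 2$ and that the statement holds for every smaller value of $c+s$. Let $G$ be a graph with $|V(G)|\geq c^s$. Fix any $v\in V(G)$, let $A$ be the set of neighbors of $v$ in $G$, and let $B=V(G)\setminus(A\cup\{v\})$, so that $|A|+|B|=|V(G)|-1\geq c^s-1$. The numerical heart of the argument is the elementary inequality $(c-1)^s+c^{s-1}\leq c^s$, which follows from $(c-1)^s=(c-1)(c-1)^{s-1}\leq (c-1)c^{s-1}$, whence $(c-1)^s+c^{s-1}\leq (c-1)c^{s-1}+c^{s-1}=c^s$. Consequently, if both $|A|\leq (c-1)^s-1$ and $|B|\leq c^{s-1}-1$, then $|A|+|B|\leq c^s-2<c^s-1$, a contradiction. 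Hence either $|A|\geq (c-1)^s$ or $|B|\geq c^{s-1}$.

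In the first case, the inductive hypothesis applied to $G[A]$ with parameters $(c-1,s)$ yields either a clique of cardinality $c-1$ in $A$, which together with $v$ gives a clique of cardinality $c$ in $G$, or a stable set of cardinality $s$ in $A$, which already suffices. In the second case, the inductive hypothesis applied to $G[B]$ with parameters $(c,s-1)$ yields either a clique of cardinality $c$ in $B$ (which suffices), or a stable set of cardinality $s-1$ in $B$, which together with $v$ (a non-neighbor of every vertex in $B$) forms a stable set of cardinality $s$ in $G$. There is no real obstacle in this argument: it is a standard Erd\H{o}s--Szekeres-style recursion, and the only point requiring care is the elementary inequality above which validates the pigeonhole step between $A$ and $B$.
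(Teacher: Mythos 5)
Your proof is correct. The paper does not supply a proof of this statement at all—it is quoted as a known result attributed to Ramsey with a citation to \cite{multiramsey}—so there is nothing in the paper to compare against. Your argument is the standard Erd\H{o}s--Szekeres-style recursion: the base cases $c=1$ and $s=1$ are handled properly, the inequality $(c-1)^s+c^{s-1}\leq c^s$ is verified correctly, the pigeonhole split between the neighbourhood $A$ of $v$ and the non-neighbourhood $B$ is sound, and in each branch the vertex $v$ is correctly appended (to a clique from $A$, or to a stable set from $B$). This establishes the quantitative bound $c^s$ exactly as stated.
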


The following lemma is the key tool in our proof of Theorem~\ref{thm:comp_model_general}:

\begin{lemma}\label{lem:comp_model_rigid}
     For all $s,t,\rho,\sigma\in \poi$, there are constants $f_{\ref{lem:comp_model_rigid}}=f_{\ref{lem:comp_model_rigid}}(s,t,\rho,\sigma)\in \poi$ and $g_{\ref{lem:comp_model_rigid}}=g_{\ref{lem:comp_model_rigid}}(s,\rho,\sigma)\in \poi$ with the following property. Let $G$ be a $K_{t+1}$-free graph and let $(B,\mca{Q})$ be a strong $(f_{\ref{lem:comp_model_rigid}},g_{\ref{lem:comp_model_rigid}})$-block in $G$ such that  for every $\{x,y\}\subseteq B$, the paths $(Q^*: Q\in \mca{Q}_{\{x,y\}})$ are pairwise anticomplete in $G$. Then one of the following holds.
\begin{enumerate}[{\rm (a)}]
 \item\label{lem:comp_model_rigid_a}  There is an induced subgraph of $G$ isomorphic to a proper subdivision of $K_s$.
    \item \label{lem:comp_model_rigid_b} There is a linear induced $(\rho,\sigma)$-model in $G$. 
\end{enumerate}
\end{lemma}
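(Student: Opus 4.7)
I plan to prove the lemma by seeking $s$ vertices of $B$ inducing a stable set in $G$ together with one path $Q_P$ per pair $P$ of these vertices chosen from $\mca{Q}_P$, so that the chosen paths form an induced proper subdivision of $K_s$; whenever this fails I will instead extract a linear $(\rho, \sigma)$-model. The $K_{t+1}$-freeness of $G$ enters through Theorem~\ref{thm:classicalramsey}: applied to $G[B]$, it yields a stable subset $B' \subseteq B$ of any desired size once $f_{\ref{lem:comp_model_rigid}}$ is large enough. Stability of $B'$ guarantees that every path in every $\mca{Q}_P$ with $P \subseteq B'$ has length at least $2$, hence non-empty interior $Q^*$, and also automatically gives properness of the intended subdivision.

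The obstructions to the chosen paths forming an induced subdivision split into two kinds of ``bad'' edges across distinct pairs $P_1, P_2 \subseteq B'$: \emph{interior-interior} edges between $V(Q_{P_1}^*)$ and $V(Q_{P_2}^*)$, and \emph{interior-endpoint} edges from $V(Q_{P_1}^*)$ to $P_2 \setminus P_1$. I tackle the latter first. For each pair $P \subseteq B'$ and each $z \in B' \setminus P$ in turn, I $2$-color $\mca{Q}_P$ according to whether $z$ has a neighbor in $V(Q^*)$ and replace $\mca{Q}_P$ by the larger monochromatic class; iterating this pigeonhole step over all $z \in B' \setminus P$ leaves a sub-collection $\mca{Q}'_P \subseteq \mca{Q}_P$ of size at least $|\mca{Q}_P|/2^{|B'|-2}$ on which the color is uniform for every $z$. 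Let $N_P \subseteq B' \setminus P$ be the set of $z$'s for which the uniform color is ``adjacent''. If $|N_P| \geq \sigma$ for some $P$, then $\sigma$ singletons from $N_P$ and $\rho$ path-interiors from $\mca{Q}'_P$ assemble into a linear $(\sigma, \rho)$-model, which transposes to a linear $(\rho, \sigma)$-model; this yields (b). So assume $|N_P| \leq \sigma - 1$ for all $P \subseteq B'$.

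To amplify this local bound into a global clean set, I apply Theorem~\ref{thm:multiramsey} with $m = 3$ and two colors, calling a triple $\{a, b, c\} \subseteq B'$ \emph{blue} if at least one of the symmetric relations $a \in N_{\{b,c\}}$, $b \in N_{\{a,c\}}$, $c \in N_{\{a,b\}}$ holds, and \emph{red} otherwise. With $|B'|$ chosen large enough, I obtain a monochromatic $S \subseteq B'$ of size $s' = \max\{s, 3\sigma + 2\}$. A blue $S$ would distribute at least $\binom{s'}{3}$ relations over the $\binom{s'}{2}$ pairs of $S$, so by averaging some $P \subseteq S$ would satisfy $|N_P \cap S| \geq (s'-2)/3 \geq \sigma$, contradicting the previous paragraph. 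Hence $S$ is red, i.e., for every pair $P \subseteq S$ and every $z \in S \setminus P$ no interior of a path in $\mca{Q}'_P$ contains a neighbor of $z$.

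It remains to handle interior-interior edges on $S$. For every two pairs $P_1, P_2 \subseteq S$, let $H_{P_1, P_2}$ be the bipartite graph on $\mca{Q}'_{P_1} \times \mca{Q}'_{P_2}$ whose edges are the pairs $(Q, Q')$ with $V(Q^*)$ and $V(Q'^*)$ not anticomplete in $G$. If some $H_{P_1, P_2}$ contains $K_{\rho, \sigma}$ as a subgraph, then the corresponding $\rho + \sigma$ interiors form a linear $(\rho, \sigma)$-model --- pairwise disjoint by strongness of $(B, \mca{Q})$ combined with the anticompleteness within each $\mca{Q}_P$ --- so (b) holds. Otherwise the K\H{o}v\'ari--S\'os--Tur\'an bound gives $|E(H_{P_1, P_2})| = O(|\mca{Q}'_{P_1}|^{2 - 1/\sigma})$ uniformly in $P_1, P_2$; once $g_{\ref{lem:comp_model_rigid}}$ is large enough relative to $|S|$, a uniformly random choice of one $Q_P \in \mca{Q}'_P$ per pair $P \subseteq S$ has positive probability of yielding pairwise anticomplete interiors. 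Combined with the red property of $S$ and the stability of $B'$, the union of the chosen paths is then an induced proper subdivision of $K_s$, giving (a). The main obstacle throughout is the interior-endpoint class of edges: for them a direct averaging fails because the relevant K\H{o}v\'ari--S\'os--Tur\'an bound carries an additive term of order $|\mca{Q}_P|$ that does not shrink as $g$ grows, and the two-level Ramsey extraction of the middle two paragraphs is precisely designed to bypass this obstacle.
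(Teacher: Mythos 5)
Your proof is correct but takes a genuinely different route from the paper's, essentially trading the product Ramsey theorem (Theorem~\ref{thm:productramsey}) for a pigeonhole step plus a probabilistic K\H{o}v\'ari--S\'os--Tur\'an argument. Both proofs extract a stable subset of $B$ and then face the same two enemies: interior--interior edges between two chosen paths, and interior--endpoint edges from a path interior to a third branch vertex. The paper attacks interior--interior \emph{first}: it applies product Ramsey with the coloring of a tuple $(Q_1,\ldots,Q_\gamma)\in\mca{Q}_1\times\cdots\times\mca{Q}_\gamma$ by the set of clashing pairs of indices, shows the monochromatic color must be empty (else a clash between $\mca{Q}_i$ and $\mca{Q}_j$ gives a $(b,b)$-model for $b=\max\{\rho,\sigma\}$, using the anticompleteness hypothesis inside each $\mca{Q}_{\{x,y\}}$), and thus fixes one path per pair with pairwise anticomplete interiors. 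It then colors triples by which positions attach to the opposite fixed interior, and rules out nonempty colors by splitting a monochromatic set into five blocks of size $b$ to build another $(b,b)$-model. You instead uniformize the interior--endpoint attachments \emph{first} by an iterated pigeonhole over $z\in B'\setminus P$, which defines the sets $N_P$ and gives outcome (b) outright when $|N_P|\ge\sigma$; you then color triples by whether any element lies in the $N$-set of the complementary pair, with the averaging count $\binom{s'}{3}\le\binom{s'}{2}\max_P|N_P\cap S|$ playing the role of the paper's five-block construction; and you finally dispatch interior--interior edges by a random selection enabled by the K\H{o}v\'ari--S\'os--Tur\'an bound on edges in $K_{\rho,\sigma}$-free bipartite graphs (which produces small density once $g$ is large, exactly as needed for a union bound over all $\binom{\binom{s'}{2}}{2}$ pairs of pairs). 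Both arguments produce the required proper induced subdivision, with bounds of comparable quality. One small imprecision: your parenthetical ``pairwise disjoint by strongness of $(B,\mca{Q})$ combined with the anticompleteness within each $\mca{Q}_P$'' should invoke internal disjointness of the paths in $\mca{Q}_P$ (part of the definition of a block) rather than anticompleteness --- that is what makes interiors within a fixed pair disjoint; anticompleteness is irrelevant to disjointness.
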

\begin{proof}
Let $b=\max\{\rho,\sigma\}$ and let
$$\kappa=f_{\ref{thm:multiramsey}}(8,3,\max\{5b,s\}).$$

We will prove that
$$f_{\ref{lem:comp_model_rigid}}=f_{\ref{lem:comp_model_rigid}}(s,t,\rho,\sigma)=(t+1)^{\kappa}$$
and
$$g_{\ref{lem:comp_model_rigid}}=g_{\ref{lem:comp_model_rigid}}(s,\rho,\sigma)=f_{\ref{thm:productramsey}}\left(\binom{\kappa}{2},b, 2^{\displaystyle\binom{\binom{\kappa}{2}}{2}}\right)$$
satisfy the lemma.

Let $G$ be a $K_{t+1}$-free graph and let $(B,\mca{Q})$ be a strong $(f_{\ref{lem:comp_model_rigid}},g_{\ref{lem:comp_model_rigid}})$-block in $G$ such that  for every $\{x,y\}\subseteq B$, the paths $(Q^*: Q\in \mca{Q}_{\{x,y\}})$ are pairwise anticomplete in $G$. Suppose that \ref{lem:comp_model_rigid}\ref{lem:comp_model_rigid_b} does not hold; that is, there is no linear induced $(\rho,\sigma)$-model in $G$. Since $G$ is $K_{t+1}$-free, it follows from Theorem~\ref{thm:classicalramsey} and the choice of $f_{\ref{lem:comp_model_rigid}}$ that there is a stable set $S\subseteq B$ in $G$ with $|S|=\kappa$. In particular, for every $2$-subset $\{x,y\}$ of $S$, the paths in $\mca{Q}_{\{x,y\}}$ have nonempty interiors.

Now, we use the choice of $g_{\ref{lem:comp_model_rigid}}$ to show that:

\sta{\label{st:antiint2} For every $2$-subset $\{x,y\}$ of $S$, there exists $Q_{\{x,y\}}\in \mca{Q}_{\{x,y\}}$ such that for all distinct $2$-subsets $\{x,y\}, \{x',y'\}$ of $S$,  the paths $Q^*_{\{x,y\}}$ and $Q^*_{\{x',y'\}}$ are anticomplete in $G$.}

Let $\gamma=\binom{\kappa}{2}$. Then we have
$$g_{\ref{lem:comp_model_rigid}}=f_{\ref{thm:productramsey}}\left(\gamma, b, 2^{\binom{\gamma}{2}}\right).$$

Fix an enumeration $S_1,\ldots, S_{\gamma}$ of all $2$-subset of $S$, and write $\mca{Q}_i=\mca{Q}_{S_i}$ for every $i\in \poi_{\gamma}$. For each $z=(Q_1, \ldots, Q_{\gamma})\in \mca{Q}_1\times \cdots \times \mca{Q}_{\gamma}$, let $\Phi(z)$ be the set of all $2$-subsets $\{i,j\}$ of $\poi_{\gamma}$ such that the paths $Q^*_i$ and $Q^*_j$ are not anticomplete in $G$. Then
$$\Phi:\mca{Q}_1\times \cdots \times \mca{Q}_{\gamma}\rightarrow 2^{\displaystyle\binom{\poi_{\gamma}}{2}}$$
is a well-defined map. From Theorem~\ref{thm:productramsey} and the choice of $g_{\ref{lem:comp_model_rigid}}$, we deduce that there is a set $I$ of $2$-subsets of $\poi_{\gamma}$, as well as a $b$-subset $\mca{Z}_i=\{Z_{i,1},\ldots,Z_{i,b}\}$ of $\mca{Q}_i$ for each $i\in \poi_{\gamma}$, such that for every $z\in \mca{Z}_1\times \cdots\times \mca{Z}_{\gamma}$, we have $\Phi(z)=I$.

We further claim that $I=\varnothing$. For suppose that some $2$-subset $\{i,j\}$ of $\poi_{\gamma}$ belongs to $I$. Then, from the definition of $\Phi$ and the choice of $I$, it follows that for all $i',j'\in \poi_{b}$, the paths $Z^*_{i,i'}$ and $Z^*_{j,j'}$ are not anticomplete in $G$. Moreover, by the assumption of Lemma~\ref{lem:comp_model_rigid}, the paths $Z^*_{i,1},\ldots,Z^*_{i,b}$ are pairwise anticomplete in $G$, and the paths $Z^*_{j,1},\ldots,Z^*_{j,b}$ are pairwise anticomplete in $G$. But now $(Z^*_{i,1},\ldots,Z^*_{i,b};Z^*_{j,1},\ldots,Z^*_{j,b})$ is a linear induced $(b,b)$-model in $G$. Since $b=\max\{\rho,\sigma\}$, this is a contradiction to the assumption that there is no linear induced $(\rho,\sigma)$-model in $G$. It follows that $I=\varnothing$.

For each $i\in \poi_{\gamma}$, let $Q_i=Z_{i,1}$. Then $z=(Q_1,\ldots, Q_{\gamma})\in \mca{Z}_1\times \cdots \times \mca{Z}_{\gamma}$, and by the above claim, we have $\Phi(z)=I=\varnothing$. From the definition of $\Phi$, it follows that the paths $(Q^*_i:i\in \poi_{\gamma})$ are pairwise anticomplete in $G$. This proves \eqref{st:antiint2}.
\medskip

For every $2$-subset $\{x,y\}$ of $S$, let $Q_{\{x,y\}}\in \mca{Q}_{\{x,y\}}$ be as given by \eqref{st:antiint2}. The last step is to use the choice of $\kappa$ to prove the following (this is similar to the proof of 2.2 in \cite{pinned}).

\sta{\label{st:branchantiint} There is an $s$-subset $S'$ of $S$ such that for all distinct $x,y,z\in S'$, the vertex $x$ is anticomplete to $Q^*_{y,z}$ in $G$.}

Fix an enumeration $x_1,\ldots, x_{\kappa}$ of the vertices in $S$, and  write $Q_{i,j}=Q_{\{x_i,x_j\}}$ for all $i,j\in \poi_{\kappa}$ with $i<j$. For every $3$-subset $T=\{t_1,t_2,t_3\}$ of $\poi_{\kappa}$ with $t_1<t_2<t_3$, let $\Phi(T)$ be the set of all $i\in \{1,2,3\}$ such that $x_{t_i}$ is not anticomplete to $Q^*_{t_j,t_k}$ in $G$, where $\{j,k\}=\{1,2,3\}\setminus \{i\}$ with $j<k$. Then 
$$\Phi:\binom{\poi_{\kappa}}{3}\rightarrow 2^{\{1,2,3\}}$$
is a well-defined map. From Theorem~\ref{thm:multiramsey} and the choice of $\kappa$, we deduce that there is a subset $F\subseteq \{1,2,3\}$ as well as a $\max\{5b,s\}$-subset $Z$ of $\poi_{\kappa}$ such that for every $3$-subset $T$ of $Z$, we have $\Phi(T)=F$. In particular, since $|Z|\geq 5b$, we may choose $I_1,J,I_2,K,I_3\subseteq Z$ with $|I_1|=|J|=|I_2|=|K|=|I_3|=b$ such that
$$\max I_1<\min J\leq \max J<\min I_2\leq \max I_2<\min K\leq \max K<\min I_3.$$

We further claim that $F=\varnothing$. For suppose there exists  $f\in F\subseteq \{1,2,3\}$. Let $J=\{j_1,\ldots, j_{b}\}$ and let $K=\{k_1,\ldots, k_{b}\}$. Then, for every $i\in I_f$ and every $l\in \poi_{b}$, the set $\{i,j_{l},k_{l}\}$ is a $3$-subset of $Z$. It follows that $\Phi(\{i,j_{l},k_{l}\})=F$, and so $f\in \Phi(\{i_l,j_{l'},k_{l'}\})$. This, along with the definition of $\Phi$ and the choice of $I_1,J,I_2,K,I_3$, implies that for  for every $i\in I_f$ and every $l\in \poi_{b}$, the vertex $x_{i}$ is not anticomplete to $Q^*_{j_{l},k_{l}}$ in $G$. Moreover, assuming $I_f=\{i_1,\ldots, i_b\}$, it follows that $\{x_{i_1}, \ldots, x_{i_b}\}$ is a stable set in $G$ because $S$ is, and by \eqref{st:antiint2}, the sets $Q^*_{j_1,k_1}, \ldots, Q^*_{j_{b},k_{b}}$ are pairwise anticomplete in $G$. But now $( \{x_{i_1}\}, \ldots, \{x_{i_b}\}; Q^*_{j_1,k_1}, \ldots, Q^*_{j_{b},k_{b}})$ is a linear induced $(b,b)$-model in $G$, a contradiction. The claim follows.

Choose an $s$-subset $Z'$ of $Z$ (as $|Z|\geq s$), and let $S'=\{x_i:i\in Z'\}\subseteq S$. By the above claim, for every $3$-subset $T=\{t_1,t_2,t_3\}$ of $S'\subseteq S\subseteq \poi_{\kappa}$ with $t_1<t_2<t_3$, we have $\Phi(T)=\varnothing$, and so for all $i\in \{1,2,3\}$, the vertex $x_{t_i}$ is anticomplete to $Q^*_{t_j,t_k}$ in $G$, where $\{j,k\}=\{1,2,3\}\setminus \{i\}$ with $j<k$. In other words, for all distinct $x,y,z\in S'$, the vertex $x$ is anticomplete to $Q^*_{\{y,z\}}$ in $G$. This proves \eqref{st:branchantiint}.
\medskip

Let $S'$ be the $s$-subset of $S$ given by \eqref{st:branchantiint}. Then $S'$ is stable (because $S$ is). Also, by \eqref{st:antiint2} for all distinct $2$-subsets $\{x,y\}, \{x',y'\}$ of $S'\subseteq S$,  the interiors of the paths $Q_{\{x,y\}}$ and $Q_{\{x',y'\}}$ are anticomplete in $G$, and by \eqref{st:branchantiint}, for all distinct $x,y,z\in S'$, the vertex $x$ is anticomplete to $Q^*_{y,z}$ in $G$. Hence,
$$\bigcup_{\{x,y\}\in {S'\choose 2}}Q_{\{x,y\}}$$
is an induced subgraph of $G$ isomorphic to a proper subdivision of $K_{s}$, and so \ref{lem:comp_model_rigid}\ref{lem:comp_model_rigid_a} hold. This completes the proof of Lemma~\ref{lem:comp_model_rigid}.
\end{proof}

Now we can prove Theorem~\ref{thm:comp_model_general}:

\begin{proof}[Proof of Theorem~\ref{thm:comp_model_general}]
Let 
$$\phi=f_{\ref{lem:comp_model_rigid}}(s,t,\rho,\sigma)$$
and let
$$\gamma=g_{\ref{lem:comp_model_rigid}}(s,\rho,\sigma).$$

We will show that 
$$f_{\ref{thm:comp_model_general}}=f_{\ref{thm:comp_model_general}}(s,t,\mu, \rho,\sigma)=f_{\ref{thm:noblock}}(\phi,\mu^{\gamma},t)$$
satisfies the theorem.

Let $G$ be a $t$-clean graph of treewidth more than $f_{\ref{thm:comp_model_general}}$. Suppose that \ref{thm:comp_model_general}\ref{thm:comp_model_general_c} does not hold; that is, there is no linear $\mu$-model in $G$. By Theorem~\ref{thm:noblock}, there is a strong $(\phi,\mu^{\gamma})$-block $(B,\mca{P})$ in $G$. Moreover, we deduce that:

\sta{\label{st:antiint1} For every $2$-subset $\{x,y\}$ of $B$, there is a $\gamma$-subset $\mca{Q}_{\{x,y\}}$ of $\mca{P}_{\{x,y\}}$ such that the paths $(Q^*: Q\in \mca{Q}_{\{x,y\}})$ are pairwise anticomplete in $G$.}

To see this, let $G_{\{x,y\}}$ be the graph with vertex set $\mca{P}_{\{x,y\}}$ such that for all distinct $P,P'\in \mca{P}_{\{x,y\}}$, we have $PP'\in E(G_{\{x,y\}})$ if and only if $P^*$ and $P'^*$ are not anticomplete in $G$. Assume that there are $\mu$ paths $P_1\ldots, P_{\mu}\in \mca{P}_{\{x,y\}}$ such that $\{P_1\ldots, P_{\mu}\}$ is a clique in $G_{\{x,y\}}$. Then $(P^*_1\ldots, P^*_{\mu})$ is a linear $\mu$-model in $G$, contrary to the assumption that \ref{thm:comp_model_general}\ref{thm:comp_model_general_c} does not hold. Thus, there is no clique in $G_{\{x,y\}}$ of cardinality $\mu$. Since $V(G_{\{x,y\}})=|\mca{P}_{\{x,y\}}|\geq \mu^{\gamma}$, it follows from Theorem~\ref{thm:classicalramsey} that there is stable set $\mca{Q}_{\{x,y\}}\subseteq \mca{P}_{\{x,y\}}$ in $G_{\{x,y\}}$ of cardinality $\gamma$. But now by the definition of $G_{\{x,y\}}$, the paths $(Q^*: Q\in \mca{Q}_{\{x,y\}})$ are pairwise anticomplete in $G$. This proves \eqref{st:antiint1}.
\medskip

Henceforth, for every $2$-subset $\{x,y\}$ of $B$, let $\mca{Q}_{\{x,y\}}$ be as given by \eqref{st:antiint1}. It follows that $(B,\mca{Q})$ is a strong $(\phi,\gamma)$-block in $G$ such that for every $\{x,y\}\subseteq B$, the paths $(Q^*: Q\in \mca{Q}_{\{x,y\}})$ are pairwise anticomplete in $G$. By the choice of $\phi,\gamma$, we can apply Lemma~\ref{lem:comp_model_rigid} to $G$ and $(B,\mca{Q})$. But now \ref{lem:comp_model_rigid}\ref{lem:comp_model_rigid_a} implies \ref{thm:comp_model_general}\ref{thm:comp_model_general_a} and \ref{lem:comp_model_rigid}\ref{lem:comp_model_rigid_b} implies \ref{thm:comp_model_general}\ref{thm:comp_model_general_b}. This completes the proof of Theorem~\ref{thm:comp_model_general}.
\end{proof}

From Theorem~\ref{thm:comp_model_general}, we deduce the main result of this section, which we restate:

\compmodel*

\begin{proof}
    Let 
    
    $$f_{\ref{thm:comp_model}}=f_{\ref{thm:comp_model}}(a,b,c,\mu)=f_{\ref{thm:comp_model_general}}(2c+2,\max\{a,b,2c\},b,b,\mu).$$

    Let $G$ be an $(a,b,c)$-candidate of treewidth more than $f_{\ref{thm:comp_model}}$. It is straightforward to check that every subdivision of $W_{2c\times 2c}$ has two anticomplete induced subgraphs each isomorphic to a subdivision of $W_{c\times c}$, and so each with treewidth $c$. It follows in particular that $G$ is $\max\{a,b,2c\}$-clean. Therefore, we can apply Theorem~\ref{thm:comp_model_general} to $G$. Note that every proper subdivision of $K_{2c+2}$ has two anticomplete induced subgraphs each isomorphic to a (proper) subdivision of $K_{c+1}$, and so each with treewidth $c$. Thus, since $G$ is an $(a,b,c)$-candidate, it follows that $G$ has no induced subgraph isomorphic to a proper subdivision of $K_{2c+2}$, and so \ref{thm:comp_model_general}\ref{thm:comp_model_general_a} does not hold. Also, there is no induced $(b,b)$-model in $G$ because $G$ is an $(a,b,c)$-candidate, and so \ref{thm:comp_model_general}\ref{thm:comp_model_general_b} does not hold either. It follows that \ref{thm:comp_model_general}\ref{thm:comp_model_general_c} holds; that is, there is linear $\mu$-model in $G$. This completes the proof of Theorem~\ref{thm:comp_model}.
\end{proof}

\section{Half-induced models} \label{sec:half}
The main result of this section is the following:

\begin{restatable}{theorem}{halfindmain}\label{thm:half_ind_main}
    For all $a,\rho,\sigma\in \poi$, there is a constant $f_{\ref{thm:half_ind_main}}=f_{\ref{thm:half_ind_main}}(a,\rho,\sigma)\in \poi$ with the following property. Let $G$ be a $K_{a}$-free graph and assume that there is a linear $f_{\ref{thm:half_ind_main}}$-model in $G$. Then there is a linear $B$-induced $(\rho, \sigma)$-model in $G$.
\end{restatable}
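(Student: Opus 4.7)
The plan is to start from a linear $\mu$-model $K = (P_1, \dots, P_\mu)$ in $G$ for some very large $\mu = f_{\ref{thm:half_ind_main}}(a,\rho,\sigma)$ and, through a sequence of Ramsey-style reductions, either extract the desired linear $B$-induced $(\rho,\sigma)$-model or derive a contradiction. Throughout, we orient each path $P_i$ so that for any $j \neq i$ we may speak of the \emph{footprint} of $P_j$ on $P_i$: the set of vertices of $P_i$ that have a neighbor in $P_j$, together with its leftmost and rightmost elements which we call the \emph{first} and \emph{last attachments}.

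First, I would perform a degree reduction: ideally pass to a large sub-model in which every vertex of each branch set has neighbors in at most $d$ other branch sets, for a constant $d = d(a,\rho,\sigma)$. Concretely, if some vertex $v \in P_i$ has neighbors in many of the $P_j$'s, then its neighborhood $N(v)$ meets many branch sets; since $G$ is $K_a$-free, Theorem~\ref{thm:classicalramsey} applied inside $N(v)$ yields either a clique of size $a$ (contradiction) or a large stable set of neighbors of $v$ spread across many branch sets. Combined with the linear $\mu$-model structure (all $P_j$ pairwise touch), this allows one to promote the singleton $\{v\}$ to the $A$-side of a partial model in which the relevant $P_j$'s are witnessed to be pairwise non-adjacent at their chosen attachment vertices; iterating on $\rho$ disjoint such vertices $v_1,\dots,v_\rho$ gives a $B$-induced $(\rho,\sigma)$-model with singleton $A$-side branch sets directly. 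So in what follows we may assume the bounded-degree regime.

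Next, fix a path $P$ in the reduced model and classify each other branch set $Q$: say $(P,Q)$ is \emph{aligned} if the footprint of $Q$ on $P$ lies in a sub-interval of $P$ of length less than some threshold $\tau = \tau(\rho,\sigma)$, and \emph{non-aligned} otherwise. Applying Theorem~\ref{thm:multiramsey} to $2$-colour pairs of indices by this dichotomy, and then further to classify pairwise interactions, we obtain a large sub-family of branch sets that is uniformly of one type with respect to $P$. In the non-aligned case each relevant $Q$ has a pair of attachments on $P$ that are at distance at least $\tau$ apart; applying Theorem~\ref{thm:productramsey} to the coordinates (first attachment, last attachment) of these $Q$'s on $P$, we may further require that their ``spans'' along $P$ form a nested chain through $P$. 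Now a standard pigeonhole slicing of $P$ into $\sigma$ short sub-intervals separated by gaps, chosen inside the common span, lets each of $\rho$ such $Q$'s witness attachments in every one of the $\sigma$ sub-intervals. Because $P$ is an induced path in $G$, the sub-intervals are pairwise anticomplete, and together with the $\rho$ paths $Q$ they form the desired linear $B$-induced $(\rho,\sigma)$-model.

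In the aligned case, every $Q$ has its footprint on $P$ crammed into a short interval of $P$, so each $Q$ is naturally associated with a small ``location'' on $P$. Using multi-dimensional Ramsey on the location data (as in the alignment arguments of \cite{tw9,tw11,tw12}), we may assume the locations are pairwise far apart along $P$. But in that case $P$ itself splits into $\sigma$ pairwise anticomplete sub-paths $B_1,\dots,B_\sigma$ each containing the small footprint of a distinct block of $\rho$ of the $Q$'s, and we apply the argument recursively to the linear sub-model on those $Q$'s in order to construct $\rho$ $A$-side paths meeting all $\sigma$ blocks through $P$. The main obstacle in the whole argument is precisely this recursion and the bookkeeping of Ramsey bounds: one must set $\mu$ large enough to survive the sparsification step, the alignment dichotomy, and the internal Ramsey steps needed to produce $\rho$ paths each meeting all $\sigma$ chosen sub-intervals of $P$; and one must define ``aligned'' sharply enough that the aligned case really does force the splitting of $P$ to be useful rather than merely sparse.
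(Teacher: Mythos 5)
Your overall plan — degree reduction, then an alignment dichotomy, then a recursion — matches the paper's architecture (Lemmas~\ref{lem:half_ind0}, \ref{lem:half_ind1} and~\ref{lem:half_ind2}), but both branches of your dichotomy have gaps that I don't see how to close.

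\textbf{The ``non-aligned'' case.} You want to say: after forcing nested spans via Theorem~\ref{thm:productramsey}, slice $P$ into $\sigma$ short sub-intervals inside the common span, and then each of the $\rho$ chosen $Q$'s ``witnesses attachments in every one of the $\sigma$ sub-intervals.'' That last step is false. A long span (large distance between first and last attachment) tells you nothing about where the attachments lie inside the span; $Q$ could attach only at the two endpoints of its span, and any interior slice would be anticomplete to $Q$. Nested spans do not propagate attachments into every sub-interval. The paper's Lemma~\ref{lem:half_ind1} gets around exactly this: in the big-clique case of the interval graph, Helly gives a common vertex $x$ of all the intervals $P_i=u_i\dd B\dd v_i$; after discarding the $\le d$ sets with $x\in N(A_i)$, the remaining $A_i$ have $x$ strictly interior to $P_i$, so they attach on \emph{both} sides of $x$. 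One then splits $B$ at $x$ and recurses on $\sigma-1$, extracting one new $B$-side piece per level. It never needs, and cannot get, $\sigma$ attachments from the same $A_i$ in a single shot.

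\textbf{The ``aligned'' case.} Your notion of aligned (``footprint confined to a short interval'') is not the paper's; the paper's $A$-aligned models have pairwise disjoint intervals occurring in a fixed order, with no bound on their individual lengths. More importantly, your proposed recursion does not close. You split $P$ into $\sigma$ pairwise anticomplete sub-paths $B_1,\dots,B_\sigma$, each containing the (short) footprints of a distinct block of $\rho$ of the $Q$'s. But then a $Q$ in block $k$ touches only $B_k$, not the other $B_j$'s, so the $Q$'s cannot serve as $A$-side branch sets of a $(\rho,\sigma)$-model with these $B_j$'s on the other side. You say you will ``construct $\rho$ $A$-side paths meeting all $\sigma$ blocks \emph{through $P$},'' but any path through $P$ would use the interiors of the $B_j$'s and ruin their pairwise anticompleteness; and if you avoid $P$, the $Q$'s in one block have no reason to touch $B_j$ for $j\neq k$. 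The paper's Lemma~\ref{lem:half_ind2} is structured so that the recursion is on the number $\varsigma$ of $B$-side sets, and the new $B$-side set at each level is an \emph{initial segment} of a fresh branch set $C_{\mu_\varsigma+1}$ — chosen, using the (disjoint-interval) alignment, so that it meets all the selected $A$-side branch sets but misses the branch sets that supplied the earlier $B$-side pieces. Each $B$-side piece thus lives in a \emph{different} original branch set, not in sub-intervals of a single $P$.

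\textbf{Minor point on the degree reduction.} You speak of a $B$-induced model ``with singleton $A$-side branch sets,'' but the paper's (and the correct) way around is singleton $B$-side sets: $(C_{j_1},\ldots,C_{j_\rho};\{x_{i_1}\},\ldots,\{x_{i_\sigma}\})$ where $\{x_{i_1},\ldots,x_{i_\sigma}\}$ is a stable set of vertices each adjacent to all $\rho$ paths. A stable set of singletons is automatically pairwise anticomplete, which is what $B$-induced needs; your variant would need several $P_j$'s to be pairwise anticomplete, which the complete model explicitly prohibits. The ``iterating on $\rho$ disjoint such vertices'' step is also left unjustified; the paper replaces this with a single application of Theorem~\ref{thm:multiramsey} on $(\rho+1)$-subsets of branch-set indices.
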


The proof is in several steps. First, we need to pass to a linear complete model in which every vertex has neighbors in only a few branch sets. The proof is an application of Theorem~\ref{thm:multiramsey}, similar to the proof of \eqref{st:branchantiint} from Theorem \ref{thm:comp_model}:

\begin{lemma}\label{lem:half_ind0}
    For all $a,\mu,\rho,\sigma\in \poi$, there is a constant $f_{\ref{lem:half_ind0}}=f_{\ref{lem:half_ind0}}(a,\mu,\rho,\sigma)\in \poi$ with the following property. Let $G$ be a $K_{a}$-free graph and let $K^0$ be a linear $f_{\ref{thm:half_ind_main}}$-model in $G$. Then one of the following holds.
    \begin{enumerate}[{\rm (a)}]
\item\label{lem:half_ind0_a} There is a linear $B$-induced $(\rho,\sigma)$-model in $G$.
\item\label{lem:half_ind0_b} There is a linear $\mu$-model $K$ in $G$ such that each branch set of $K$ is a branch set of $K^0$, and every vertex in $V(K)$ has neighbors in at most $\rho$ branch sets of $K$.
    \end{enumerate}
\end{lemma}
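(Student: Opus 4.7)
The plan is to mimic the structure of the proof of \eqref{st:branchantiint} in Theorem~\ref{thm:comp_model}: a single application of the multi-color Ramsey theorem (Theorem~\ref{thm:multiramsey}) on $(\rho+1)$-subsets of $[N]$, where $[N]$ indexes the branch sets $C_1,\ldots,C_N$ of $K^0$ and $N=f_{\ref{lem:half_ind0}}(a,\mu,\rho,\sigma)$ is to be chosen as an appropriate Ramsey number. For each $v\in V(K^0)$, I write $J(v)=\{j\in[N]:N(v)\cap C_j\neq\varnothing\}$ and let $j(v)\in[N]$ be the index of the branch set containing $v$.

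The coloring I would use is $\Phi\colon\binom{[N]}{\rho+1}\to 2^{[\rho+1]}$ defined by
\[
\Phi(T)=\bigl\{i\in[\rho+1]:\text{some }v\in C_{t_i}\text{ satisfies }T\setminus\{t_i\}\subseteq J(v)\bigr\}
\]
for $T=\{t_1<\cdots<t_{\rho+1}\}$. With $l=2^{\rho+1}$ colors, arity $m=\rho+1$, and monochromatic size $n=\max\{\mu,\,a^{\sigma}+\rho\}$, Theorem~\ref{thm:multiramsey} produces a monochromatic subset $S\subseteq[N]$ of some color $F\subseteq[\rho+1]$.

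If $F=\varnothing$, I claim that $K=(C_j:j\in S')$ for any $\mu$-subset $S'\subseteq S$ realizes outcome \ref{lem:half_ind0_b}. Indeed, if some $v\in V(K)$ had $|J(v)\cap S'|\geq\rho+1$, then $j(v)\in S'$ and $|(J(v)\cap S')\setminus\{j(v)\}|\geq\rho$; taking any $\rho$-subset $T_0$ of $(J(v)\cap S')\setminus\{j(v)\}$ and letting $T=T_0\cup\{j(v)\}$ produces a $(\rho+1)$-subset $T\subseteq S'\subseteq S$ whose color contains the ordinal position of $j(v)$ in $T$, contradicting $F=\varnothing$. If $F\neq\varnothing$, I pick $i^*\in F$ and partition $S$ into three consecutive blocks $L\cup M\cup R$ (in the order inherited from $[N]$) with $|L|=i^*-1$, $|R|=\rho+1-i^*$, and $|M|\geq a^\sigma$. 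For each $t\in M$ the set $L\cup\{t\}\cup R$ is a $(\rho+1)$-subset of $S$ in which $t$ occupies position $i^*$, so $i^*\in F=\Phi(L\cup\{t\}\cup R)$ yields $v_t\in C_t$ with a neighbor in every $C_l$, $l\in L\cup R$. The vertices $v_t$ lie in pairwise disjoint branch sets of $K^0$ and are therefore distinct; since $G$ is $K_a$-free and $|M|\geq a^\sigma$, Theorem~\ref{thm:classicalramsey} extracts a stable subset $v_{t_1'},\ldots,v_{t_\sigma'}$ from them. Then $(C_l:l\in L\cup R;\{v_{t_1'}\},\ldots,\{v_{t_\sigma'}\})$ is a linear $B$-induced $(\rho,\sigma)$-model in $G$, giving outcome \ref{lem:half_ind0_a}.

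The main obstacle — and the entire point of the coloring above — is designing $\Phi$ so that a single Ramsey pass suffices. The most natural two-color coloring (``some vertex covers $T$ / none does'') conflates witnesses internal to $T$ with external ones, and in the bad case forces a painful secondary argument to show that $\sigma$ witnesses can be chosen distinct, pairwise non-adjacent, and disjoint from the $\rho$ fixed branch sets of the $A$-side; indeed, a single high-degree vertex could be the witness for many subsets $T$. Pinning the witness to a specific ordinal position $i$ inside $T$ solves all three issues simultaneously: varying only the $i^*$-th coordinate of $T$ through $M$ automatically places the witnesses $v_t$ in pairwise distinct branch sets of $K^0$, gives disjointness from the $A$-side for free since $M\cap(L\cup R)=\varnothing$, and leaves only the mild $K_a$-free Ramsey extraction of a stable set from an $a^\sigma$-sized pool.
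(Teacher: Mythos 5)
Your proof is correct and takes essentially the same approach as the paper: the identical coloring $\Phi\colon\binom{[N]}{\rho+1}\to 2^{[\rho+1]}$, one application of Theorem~\ref{thm:multiramsey}, the $K_a$-free Ramsey extraction in the $F\neq\varnothing$ case, and the pigeonhole argument in the $F=\varnothing$ case. The only difference is cosmetic: by choosing the blocks $L,M,R$ after learning $i^*$, you get away with monochromatic size $\max\{\mu,\,a^\sigma+\rho\}$, whereas the paper reserves room for all $\rho+1$ possible positions up front and uses $\max\{\mu,\,a^\sigma(\rho+1)+\rho\}$.
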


\begin{proof}

Let $$f_{\ref{lem:half_ind0}}=f_{\ref{lem:half_ind0}}(a,\mu,\rho,\sigma)=f_{\ref{thm:multiramsey}}(2^{\rho+1},\rho+1,\max\{a^{\sigma}(\rho+1)+\rho,\mu\}).$$ 

Let $G$ be a $K_{a}$-free graph and let $K^0=(C_1,\ldots, C_{f_{\ref{lem:half_ind0}}})$ be a linear $f_{\ref{lem:half_ind0}}$-model in $G$. Suppose that there is no linear $B$-induced $(\rho,\sigma)$-model in $G$. 

For every $(\rho+1)$-subset $S=\{s_1,\ldots,s_{\rho+1}\}$ of $\poi_{f_{\ref{lem:half_ind0}}}$ with $s_1<\ldots<s_{\rho+1}$, let $\Phi(S)$ be the set of all $i\in \poi_{\rho+1}$ such that some vertex in $C_{s_i}$ has neighbors in $C_{s_j}$ for every $j\in \poi_{\rho+1}\setminus \{i\}$. Then 
$$\Phi:\binom{\poi_{f_{\ref{lem:half_ind0}}}}{\rho+1}\rightarrow 2^{\poi_{\rho+1}}$$
is a well-defined map. From Theorem~\ref{thm:multiramsey} and the choice of $f_{\ref{lem:half_ind0}}$, we deduce that there is a subset $F\subseteq \poi_{\rho+1}$ as well as a $\max\{a^{\sigma}(\rho+1)+\rho,\mu\}$-subset $Z$ of $\poi_{f_{\ref{lem:half_ind0}}}$ such that for every $(\rho+1)$-subset $S$ of $Z$, we have $\Phi(S)=F$. In particular, since $|Z|\geq a^{\sigma}(\rho+1)+\rho$, we may choose $I_1,\ldots,I_{\rho+1}\subseteq Z$ and $j_1,\ldots, j_{\rho}\in Z$ such that \begin{itemize}
    \item $|I_1|=\cdots=|I_{\rho+1}|=a^{\sigma}$; and
\item for every $h\in \poi_{\rho}$, we have $\max I_h<j_h<\min I_{h+1}$.
\end{itemize}

We further claim that:

\sta{\label{st:Fempty}$F=\varnothing$.}

Suppose for a contradiction that there exists  $f\in F\subseteq \poi_{\rho+1}$. Then, for every $i\in I_f$, the set $\{i,j_1,\ldots, j_{\rho}\}$ is a $(\rho+1)$-subset of $Z$. It follows that $\Phi(\{i,j_1,\ldots, j_{\rho}\})=F$, and so $f\in \Phi(\{i,j_1,\ldots, j_{\rho}\})$. This, along with the definition of $\Phi$ and the choice of $I_1,\ldots,I_{\rho+1}\subseteq Z$ and $j_1,\ldots, j_{\rho}\in Z$, implies that for every $i\in I_f$, there is a vertex $x_{i}\in C_i$ such that $x_i$ has neighbors in $C_{j_h}$ for every $h\in \poi_{\rho}$. Moreover, since $|I_f|=a^{\sigma}$ and since $G$ is $K_a$-free, it follows that there are $i_1,\ldots, i_{\sigma}\in I_f$ such that $\{x_{i_1},\ldots, x_{i_{\sigma}}\}$ is a stable set of cardinality $\sigma$ in $G$. But now $(C_{j_1}, \ldots,C_{j_{\rho}}; \{x_{i}\}, \ldots, \{x_{i_{\sigma}}\})$ is a linear $B$-induced $(\rho,\sigma)$-model in $G$, a contradiction. This proves \eqref{st:Fempty}.
\medskip

Choose a $\mu$-subset $I$ of $Z$ (this is possible as $|Z|\geq \mu$). By \eqref{st:Fempty}, for every $(\rho+1)$-subset $S$ of $I\subseteq Z\subseteq \poi_{f_{\ref{lem:half_ind0}}}$, we have $\Phi(S)=\varnothing$. In other words, for every $i\in I$, every vertex in $C_i$ has neighbors in fewer that $\rho$ sets among $(C_j:j\in I\setminus \{i\})$. But now $K=(C_j:j\in I)$ is a linear $\mu$-model in $G$ that satisfies \ref{lem:half_ind0}\ref{lem:half_ind0_b}. This completes the proof of Lemma~\ref{lem:half_ind0}.
\end{proof}

The second step is to show that if every vertex of the linear complete model has neighbors in only a few branch sets, then the conclusion of Theorem~\ref{thm:half_ind_main} holds. For the purpose of an inductive argument, we need to prove a technical strengthening:

\begin{restatable}{lemma}{halfindtwo}\label{lem:half_ind2}
    For all $\varrho,\rho,\varsigma,\sigma\in \poi$, there is a constant $f_{\ref{lem:half_ind2}}=f_{\ref{lem:half_ind2}}(\varrho,\rho,\varsigma,\sigma)\in \poi$ with the following property. Let $G$ be a graph and let $K=(C_1,\ldots, C_{f_{\ref{lem:half_ind2}}})$ be a linear $f_{\ref{lem:half_ind2}}$-model in $G$ such that every vertex in $V(K)$ has neighbors in at most $\rho$ branch sets of $K$. Then one of the following holds.
    
     \begin{enumerate}[{\rm (a)}]
     \item\label{lem:half_ind2_a} There is a linear $B$-induced $(\rho, \sigma)$-model in $G$.
        \item\label{lem:half_ind2_b} \sloppy There is a $B$-induced $(\varrho,\varsigma)$-model $N=(C_{j_1},\ldots, C_{j_{\varrho}}; B_{k_1},\ldots, B_{k_{\varsigma}})$ in $G$ where $j_1,\ldots,j_{\varrho},k_1,\ldots,k_{\varsigma}\in \poi_{f_{\ref{lem:half_ind2}}}$ with $j_1<\cdots <j_{\varrho}<k_1<\cdots<k_{\varsigma}$, and $B_{k_h}\subseteq C_{k_h}$ for every $h\in \poi_{\varsigma}$. In particular, $N$ is linear.
        \end{enumerate}
\end{restatable}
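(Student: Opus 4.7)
The plan is an induction on $\varsigma$. For the base case $\varsigma=1$, set $f_{\ref{lem:half_ind2}}(\varrho,\rho,1,\sigma)=\varrho+1$; given $K=(C_1,\ldots,C_{\varrho+1})$, the tuple $(C_1,\ldots,C_\varrho;\,C_{\varrho+1})$ witnesses option~(b), since $B$-inducedness is vacuous for a single $B$-set, each $C_h$ (for $h\leq\varrho$) is not anticomplete to $C_{\varrho+1}$ because $K$ is a model, and $1<\cdots<\varrho<\varrho+1$ gives the ordering. For the inductive step, let $f'=f_{\ref{lem:half_ind2}}(\varrho,\rho,\varsigma-1,\sigma)$ and define $N:=f_{\ref{lem:half_ind2}}(\varrho,\rho,\varsigma,\sigma)$ as a sufficiently large Ramsey-type quantity depending on $f'$ and the other parameters. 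Given $K=(C_1,\ldots,C_N)$, apply the inductive hypothesis to the sub-model $(C_1,\ldots,C_{f'})$, which clearly inherits the fan-in bound $\rho$. If option~(a) already holds we are done; otherwise we obtain a $B$-induced $(\varrho,\varsigma-1)$-model $(C_{j_1},\ldots,C_{j_\varrho};\,B_{k_1},\ldots,B_{k_{\varsigma-1}})$ of the required form, with $k_{\varsigma-1}\leq f'<N$.

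It then suffices to find $k_\varsigma\in\{f'+1,\ldots,N\}$ and a subpath $B_{k_\varsigma}\subseteq C_{k_\varsigma}$ that is anticomplete to every $B_{k_h}$ (for $h<\varsigma$) and not anticomplete to any $C_{j_i}$. For each candidate $m\in\{f'+1,\ldots,N\}$, classify $C_m$ by its \emph{coarse attachment profile}: for each of the existing sets $B_{k_h}$ and $C_{j_i}$, record into which of a fixed constant number of zones along the path $C_m$ the leftmost and rightmost attachment vertices fall (with a distinguished value if the attachment is empty). The total number of such profiles is a constant depending only on $\varrho$ and $\varsigma$; by Theorem~\ref{thm:multiramsey} we may pass to a large sub-family $\mathcal{M}\subseteq\{f'+1,\ldots,N\}$ on which the profile is constant. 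If this common profile identifies a zone of $C_m$ that avoids every $B_{k_h}$-attachment while still meeting each $C_{j_i}$-attachment, we take $B_{k_\varsigma}$ to be a subpath of $C_{k_\varsigma}$ inside that zone for any $k_\varsigma\in\mathcal{M}$, thereby completing option~(b).

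Otherwise, the attachments of the $B_{k_h}$'s and $C_{j_i}$'s must ``entangle'' on every $C_m\in\mathcal{M}$. In this case we invoke the fan-in hypothesis---each vertex of $C_m$ has neighbors in at most $\rho$ other branches of $K$---together with Theorem~\ref{thm:productramsey} applied to the attachment matrix between members of $\mathcal{M}$ and the remaining branches of $K$. The goal is to split several $C_m$'s into $\sigma$ pairwise anticomplete subpaths, each of which is non-anticomplete to a common set of $\rho$ branches of $K$, producing the desired linear $B$-induced $(\rho,\sigma)$-model of option~(a). The main obstacle is precisely this entangled case: converting the unavoidable overlap of $B_{k_h}$- and $C_{j_i}$-attachments into $\sigma$ anticomplete subpaths sharing $\rho$ common neighbor-sets requires carefully exploiting the fan-in bound to constrain the geometry of attachments along the split paths, and this is where the double Ramsey selection---first on the profile via Theorem~\ref{thm:multiramsey}, then on the branch-level attachment matrix via Theorem~\ref{thm:productramsey}---is essential.
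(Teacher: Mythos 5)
The base case agrees with the paper's, but the inductive step diverges and has a genuine gap that you yourself flag. The paper's proof works in the opposite order: it first applies Lemma~\ref{lem:half_ind1} to the model $(C_1,\ldots,C_{\mu_\varsigma};C_{\mu_\varsigma+1})$ to produce an \emph{$A$-aligned} $(\mu_{\varsigma-1}+1,1)$-model $M'=(C_{i_1},\ldots,C_{i_{\mu_{\varsigma-1}+1}};B')$ with $B'\subseteq C_{\mu_\varsigma+1}$, and only then applies the inductive hypothesis to the sub-model $(C_{i_1},\ldots,C_{i_{\mu_{\varsigma-1}+1}})$. The point of this order is that the $A$-alignment on $B'$ gives a linear ordering of the attachment intervals of $C_{i_1},\ldots,C_{i_{\mu_{\varsigma-1}+1}}$ along $B'$; since the inductive hypothesis delivers indices $j_1<\cdots<j_\varrho<k_1<\cdots<k_{\varsigma-1}$, the prefix $u\dd B'\dd v_{j_\varrho}$ is automatically anticomplete to each $C_{k_h}$ (hence to each $B_{k_h}$) while still meeting each $C_{j_h}$. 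No further Ramsey is needed to find $B_{k_\varsigma}$; it falls out of the alignment and the index ordering for free.

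Your version applies the inductive hypothesis first to a fixed prefix $(C_1,\ldots,C_{f'})$ and then tries to locate $B_{k_\varsigma}$ among later branch sets by Ramsey-cleaning ``coarse attachment profiles.'' The difficulty is exactly what you identify: with no alignment controlling where the $B_{k_h}$- and $C_{j_i}$-attachments sit on a later path $C_m$, those attachment zones can interleave arbitrarily, and a constant-colour Ramsey argument on profiles does not by itself yield a subpath of $C_m$ that avoids all $B_{k_h}$'s while meeting all $C_{j_i}$'s. Your third paragraph essentially admits this (``the main obstacle is precisely this entangled case'') and gestures at a double Ramsey selection without carrying it out, so the argument is incomplete at the critical step. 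The missing ingredient is Lemma~\ref{lem:half_ind1}: you need to invoke it \emph{before} the inductive call, so that the alignment of the ``new'' $B$-side path with respect to the surviving $A$-side branch sets is already in place when the inductive hypothesis hands you the ordered indices $j_1<\cdots<j_\varrho<k_1<\cdots<k_{\varsigma-1}$.
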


The proof of Lemma \ref{lem:half_ind2} relies on yet another result, Lemma \ref{lem:half_ind1} below, and that lemma needs a new definition, that of an ``aligned'' model. Let $\rho,\sigma\in \poi$, let $G$ be a graph and let $M=(A_1,\ldots, A_{\rho}; B_1,\ldots, B_{\sigma})$ be a $(\rho,\sigma)$-model in $G$. We say that $M$ is $A$-\textit{aligned} if $M$ is $B$-linear and for each $j\in [\sigma]$, there are $\rho$ pairwise disjoint subpaths $(u^j_i\dd B_j\dd v^j_i:i\in \poi_{\rho})$ of $B_j$ (possibly of length zero) such that 
 \begin{enumerate}[{\rm ({A}L1)}]
        \item\label{AL1} there is an end $u_j$ of the path $B_j$ such that $B_j$ traverses $u_j,u^j_1,v^j_1,\ldots, u^j_{\rho},v^j_{\rho}$ in this order; and
        \item\label{AL2} for every $i\in \poi_{\rho}$, all vertices in $B_j$ with a neighbor in $A_i$ are contained in $u^j_i\dd B_j\dd v^j_i$.
        \end{enumerate}

We say that $M$ is \textit{$B$-aligned} if $M^{\T}$ is $A$-aligned.
\medskip

The following is a key tool in both the proof of Lemma~\ref{lem:half_ind2} and the completion of the proof of Theorem~\ref{thm:EEZ_candidate} in the next section. Similar result have also appeared in our earlier papers \cite{tw9,tw11,tw12,ti2} based on the general idea that when we consider how different sets attach to the same path, we generally either get an ``aligned'' outcome or can split the path into several subpaths, each attaching to each set (which gave us a ``constellation'' in earlier papers, and here gives us a half-induced model). 

\begin{lemma}\label{lem:half_ind1}
    Let $d,\alpha,\rho,\sigma\in \poi$ and let $\theta=\alpha^{2(\sigma-1)}(d(\sigma-1)+\rho)$. Let $G$ be a graph and let $M=(A_1,\ldots, A_{\theta}; B)$ be a $B$-linear  $(\theta, 1)$-model in $G$ such that every vertex in $B$ has neighbors in at most $d$ sets among $A_1,\ldots, A_{\theta}$. Then one of the following holds.
     \begin{enumerate}[{\rm (a)}]
        \item \label{lem:half_ind1_a} There is an $A$-aligned $(\alpha,1)$-model $M'=(A_{i_1},\ldots, A_{i_{\alpha}}; B')$ in $G$ where $i_1,\ldots,i_{\alpha}\in \poi_\theta$ with $i_1<\cdots <i_{\alpha}$, and $B'\subseteq B$.
        \item\label{lem:half_ind1_b} There is a $B$-induced $(\rho, \sigma)$-model $M''=(A''_{1},\ldots, A''_{\rho}; B''_{1},\ldots, B''_{\sigma})$ in $G$ such that $A''_{1},\ldots, A''_{\rho}\in \{A_1,\ldots, A_{\theta}\}$ and $B(M'')\subseteq B$.
        \end{enumerate}
\end{lemma}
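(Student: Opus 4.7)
The plan is to proceed by induction on $\sigma$. The base case $\sigma=1$ is immediate: here $\theta=\rho$, so $M''=(A_1,\dots,A_\rho;B)$ is already a $B$-induced $(\rho,1)$-model and outcome \ref{lem:half_ind1}\ref{lem:half_ind1_b} holds trivially. For the inductive step I assume $\sigma\ge 2$, and also $\alpha\ge 2$ (the case $\alpha=1$ gives outcome \ref{lem:half_ind1}\ref{lem:half_ind1_a} trivially with any single $A_i$). I would fix an orientation of $B$; for each $i\in\poi_\theta$ let $f_i,l_i$ be the first and last vertices of $B$ with a neighbor in $A_i$, and set $I_i=f_i\dd B\dd l_i$. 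Then I would split into two cases according to the maximum number of pairwise vertex-disjoint intervals among $I_1,\dots,I_\theta$.

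In the first case, some set $J\subseteq\poi_\theta$ with $|J|=\alpha^2$ has $(I_j:j\in J)$ pairwise vertex-disjoint. Enumerating $J$ in the order the intervals appear along $B$ and applying Erd\H{o}s--Szekeres (a standard consequence of Theorem~\ref{thm:multiramsey}) to the resulting sequence of original indices extracts, after possibly reversing the orientation of $B$, indices $i_1<\cdots<i_\alpha$ in $J$ whose intervals appear in this order along $B$. Setting $u^1_h=f_{i_h}$ and $v^1_h=l_{i_h}$ for $h\in\poi_\alpha$, the model $M'=(A_{i_1},\dots,A_{i_\alpha};B)$ immediately satisfies (AL1) and (AL2), giving outcome \ref{lem:half_ind1}\ref{lem:half_ind1_a}.

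In the second case, no such $J$ exists. The interval graph on $\{I_i\}$ then has independence number at most $\alpha^2-1$; since interval graphs are perfect (or by a direct left-to-right greedy-coloring argument) its clique number is at least $\theta/(\alpha^2-1)\ge\theta/\alpha^2$, and Helly's theorem in one dimension produces a vertex $b^*\in V(B)$ contained in $I_i$ for at least $\theta/\alpha^2$ indices $i$; call this set $S$. At most $d$ of these $A_i$'s are actually adjacent to $b^*$, so $T=\{i\in S:A_i\text{ has no neighbor at }b^*\}$ has size at least $\theta/\alpha^2-d$ and every $A_i$ with $i\in T$ has a neighbor in $B$ strictly to either side of $b^*$. (Note that $b^*$ must be internal to $B$; otherwise every $i\in S$ would force $A_i$ adjacent to $b^*$, forcing $|S|\le d$, which contradicts the lower bound on $|S|$.) Writing $B^{\mathrm L},B^{\mathrm R}$ for the two components of $B\setminus\{b^*\}$, I would substitute $\theta=\alpha^{2(\sigma-1)}(d(\sigma-1)+\rho)$ to verify $|T|\ge\alpha^{2(\sigma-2)}(d(\sigma-2)+\rho)$, which is exactly the $\theta$-value of the lemma at $\sigma-1$, and apply the inductive hypothesis to a $\theta(\sigma-1)$-subset of the $B$-linear model $((A_i)_{i\in T};B^{\mathrm L})$. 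If this returns outcome (a), the aligned subpath $B'\subseteq B^{\mathrm L}\subseteq B$ with indices from $T\subseteq\poi_\theta$ yields outcome \ref{lem:half_ind1}\ref{lem:half_ind1_a} of the original lemma. Otherwise it returns a $B$-induced $(\rho,\sigma-1)$-model $(A''_1,\dots,A''_\rho;B''_1,\dots,B''_{\sigma-1})$ with each $A''_k\in\{A_i:i\in T\}$ and each $B''_j\subseteq B^{\mathrm L}$; I would then append $B''_\sigma=B^{\mathrm R}$. Each $A''_k$ has a neighbor in $B^{\mathrm R}$ because $A''_k$ corresponds to some $i\in T$, and $B^{\mathrm R}$ is anticomplete to every $B''_j\subseteq B^{\mathrm L}$ because $B$ is an induced path of $G$ and $b^*\notin B^{\mathrm L}\cup B^{\mathrm R}$ separates the two sides. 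This produces the $B$-induced $(\rho,\sigma)$-model required by outcome \ref{lem:half_ind1}\ref{lem:half_ind1_b}.

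The main obstacle will be the bookkeeping in the first case: the $A$-alignment definition demands that the tuple-index ordering $i_1<\cdots<i_\alpha$ match the order of the corresponding subpaths along $B'$, so finding only $\alpha$ pairwise disjoint intervals is not enough. Using Erd\H{o}s--Szekeres to force the two orderings to agree is precisely what requires $\alpha^2$ disjoint intervals, and this is in turn the source of the exponent $2(\sigma-1)$ in the lemma's quantitative bound.
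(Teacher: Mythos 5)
Your proof is correct and follows essentially the same route as the paper: the same interval graph $\Gamma$ built from the spans $I_i$, the same dichotomy between an $\alpha^2$-stable set (handled via Erdős--Szekeres to match index order with path order) and a large clique (giving a common point $b^*$, splitting $B$ at $b^*$, and recursing on one side, appending the other side as $B''_\sigma$), with the same arithmetic governing the recursion. The only cosmetic difference is how the clique is obtained — you bound the clique number from below via $\theta/(\alpha^2-1)$ using perfection, while the paper directly states the $\alpha^2$-stable/$(\theta'+d)$-clique dichotomy for a perfect graph on $\geq \alpha^2(\theta'+d)$ vertices; both are the same fact.
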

\begin{proof}The proof is by induction on $\sigma$ for fixed $d,\alpha$ and $\rho$. Suppose that $\sigma=1$. Then $\theta=\rho$ and $M''=M$ is a $B$-induced $(\rho,\sigma)$-model in $G$ which satisfies \ref{lem:half_ind1}\ref{lem:half_ind1_b}. So we may assume that $\sigma\geq 2$.

Let $u$ and $v$ be the ends of $B$. For every $i\in \poi_{\theta}$, traversing $B$ from $u$ to $v$, let $u_i$ and $v_i$, respectively, be the first and the last vertex in $B$ with a neighbor in $A_i$, and let $P_i=u_i\dd B\dd v_i$. Define $\Gamma$ to be the graph with vertex set $\poi_{\theta}$ such that for all distinct $i,i'\in \poi_{\theta}$, we have $ii'\in E(\Gamma)$ if and only if $P_i\cap P_{i'}\neq\varnothing$. Then $\Gamma$ is an interval graph, and so $\Gamma$ is perfect \cite{berge}. Also, assuming $\theta'=\alpha^{2(\sigma-2)}(d(\sigma-2)+\rho)$, it follows from $\sigma\geq 2$ that
$$|V(\Gamma)|=\theta=\alpha^{2(\sigma-1)}(d(\sigma-1)+\rho)
      = \alpha^2\theta'+\alpha^{2(\sigma-1)}d
      \geq \alpha^2(\theta'+d).$$
 In summary, $\Gamma$ is a perfect graph with $|V(\Gamma)|\geq \alpha^2(\theta'+d)$, and so $\Gamma$ contains either a stable set $I_1$ of cardinality $\alpha^2$ or a clique $I_2$ of cardinality $\theta'+d$.
 
First, assume that there is a stable set $I_1$ in $\Gamma$ with $|I_1|=\alpha^2$. It follows that the paths $(P_i:i\in I_1)$ are pairwise disjoint. Since $|I_1|=\alpha^2$ and by the Erd\H{o}s-Szekeres Theorem \cite{ErdSze}, there are $i_1,\ldots, i_{\alpha}\in I_1\subseteq \poi_{\theta}$ with $i_1<\cdots< i_{\alpha}$ such that $B$ traverses either
$$u,u_{i_1},v_{i_1}, \ldots, u_{i_{\alpha}},v_{i_{\alpha}}$$
or  
$$v,v_{i_1},u_{i_1}, \ldots, v_{i_{\alpha}},u_{i_{\alpha}}$$
in this order. But now by definition, $M'=(A_{i_1},\ldots, A_{i_{\alpha}}; B)$ is an $A$-aligned $(\alpha, 1)$-model in $G$ that satisfies \ref{lem:half_ind1}\ref{lem:half_ind1_a}.

Second, assume that $\Gamma$ contains a clique $I_2$ of cardinality $\theta'+d$. Then there is a vertex $x\in B$ such that for every $i\in I_2\subseteq \poi_{\theta}$, we have $x\in P_i$. On the other hand, by the assumption of \ref{lem:half_ind1}, there are at most $d$ values of $i\in I_2\subseteq \poi_{\theta}$ for which $x$ has neighbors in $A_i$. It follows $B\setminus \{x\}$ has two components, say $L$ and $R$, and there is a $\theta'$-subset $J$ of $I_2$ such that for every $j\in J$, neither $L$ nor $R$ is anticomplete to $A_i$.

Let $J=\{j_1,\ldots, j_{\theta'}\}$ with $j_1<\cdots <j_{\theta'}$. Then both $M_L=(A_{j_1},\ldots, A_{j_{\theta'}}; L)$ and $M_R=(A_{j_1},\ldots, A_{j_{\theta'}}; R)$ are $B$-linear $(\theta',1)$-models in $G$. Moreover, every vertex in $L\subseteq B$ has neighbors in at most $d$ sets among $A_{j_1},\ldots, A_{j_{\theta'}}$. From the choice of $\theta'$ and the inductive hypothesis applied to $M_L$, we deduce that one of the following holds.
\begin{itemize}
    \item There is an $A$-aligned $(\alpha,1)$-model $M'=(A_{i_1},\ldots, A_{i_{\alpha}}; B')$ in $G$ where $i_1,\ldots, i_{\alpha}\in J$ with $i_1<\cdots <i_{\alpha}$, and $B'\subseteq L$.
        \item There is a $B$-induced $(\rho, \sigma-1)$-model $M^-=(A''_{1},\ldots, A''_{\rho}; B''_{1},\ldots, B''_{\sigma-1})$ in $G$ such that $A''_{1},\ldots, A''_{\rho}\in \{A_{j_1},\ldots, A_{j_{\theta'}}\}$ and $B(M^-)\subseteq L$.
\end{itemize}

In the former case, $M'$ satisfies \ref{lem:half_ind1}\ref{lem:half_ind1_a} because $J\subseteq \poi_{\theta}$ and $L\subseteq B$. In the latter case, let $B''_{\sigma}=R$. Since $B(M^-)\subseteq L$ and since $L$ and $R$ are anticomplete in $G$, it follows that $M''=(A''_{1},\ldots, A''_{\rho}; B''_{1},\ldots, B''_{\sigma-1}, B''_{\sigma})$ is a $B$-induced $(\rho, \sigma)$-model in $G$ such that $A''_{1},\ldots, A''_{\rho}\in \{A_{j_1},\ldots, A_{j_{\theta'}}\}\subseteq \{A_1,\ldots, A_{\theta}\}$ and $B(M'')=B(M^-)\cup R\subseteq L\cup R\subseteq B$. But now $M''$ satisfies \ref{lem:half_ind1}\ref{lem:half_ind1_b}. This completes the proof of Lemma~\ref{lem:half_ind1}.
  \end{proof}

Now we give a proof of Lemma~\ref{lem:half_ind2}, which we also restate:

\halfindtwo*

\begin{proof}
Let $\varrho,\rho,\sigma\in \poi$ (meaning all variables in the statement except for $\varsigma$) be fixed. First, we define a sequence $\{\mu_{\varsigma}\}_{\varsigma\in \poi}$ of positive integers recursively, as follow. Let $\mu_1=\varrho$ and for $\varsigma\geq 2$, let 
$$\mu_{\varsigma}=(\mu_{\varsigma-1}+1)^{2(\sigma-1)}(\rho(\sigma-1)+\rho).$$
This concludes the recursive definition of $\{\mu_{\varsigma}\}_{\varsigma\in \poi}$. 

Back to the proof of \ref{lem:half_ind2}, we will show by induction on $\varsigma$ that:
$$f_{\ref{lem:half_ind2}}=f_{\ref{lem:half_ind2}}(\varrho,\rho,\varsigma,\sigma)=\mu_{\varsigma}+1$$
satisfies the lemma.

Let $G$ be a graph and let $K=(C_1,\ldots, C_{\mu_{\varsigma}+1})$ be a linear $(\mu_{\varsigma}+1)$-model in $G$ such that every vertex in $V(K)$ has neighbors in at most $\rho$ branch sets of $K$. Assume that \ref{lem:half_ind2}\ref{lem:half_ind2_a} does not hold; that is, there is no linear $B$-induced $(\rho,\sigma)$-model in $G$.
\medskip

Let 
$$M=(C_1,C_2\ldots, C_{\mu_{\varsigma}}; C_{\mu_{\varsigma}+1}).$$ Then $M$ is a linear $(\mu_{\varsigma},1)$-model in $G$. In particular, if $\varsigma=1$, then $\mu_{\varsigma}=\varrho$ and $N=M$ satisfies \ref{lem:half_ind2}\ref{lem:half_ind2_b}. So we may assume that $\varsigma\geq 2$. We further claim that:

\sta{\label{st:getaligned} There is an $A$-aligned $(\mu_{\varsigma-1}+1,1)$-model $M'=(C_{i_1},\ldots, C_{i_{\mu_{\varsigma-1}+1}}; B')$ in $G$ where $1\leq i_1<\cdots <i_{\mu_{\varsigma-1}+1}\leq \mu_{\varsigma}$ and $B'\subseteq C_{\mu_{\varsigma}+1}$. }

Since every vertex in $C_{\mu_{\varsigma}+1}$ has neighbors in at most $\rho$ sets among $C_1,C_2\ldots, C_{\mu_{\varsigma}}$, and by the choice of $\mu_{\varsigma}$, we can apply Lemma~\ref{lem:half_ind1} to $M$. Observe that \ref{lem:half_ind1}\ref{lem:half_ind1_a} yields exactly the assertion of \eqref{st:getaligned}. So we may assume that \ref{lem:half_ind1}\ref{lem:half_ind1_b} holds; that is, there is a $B$-induced $(\rho, \sigma)$-model $M''=(A''_{1},\ldots, A''_{\rho}; B''_{1},\ldots, B''_{\sigma})$ in $G$ such that $A''_{1},\ldots, A''_{\rho}\in \{C_1,\ldots, C_{\mu_{\varsigma}}\}$ and $B(M'')\subseteq C_{\mu_{\varsigma}+1}$. In particular, $M''$ is linear because $M$ is. But now $M''$ is a linear $B$-induced $(\rho,\sigma)$-model in $G$, a contradiction. This proves \eqref{st:getaligned}.
\medskip

Henceforth, let $M'$ be as given by \eqref{st:getaligned}. Recall that $K$ is a linear $(\mu_{\varsigma}+1)$-model in $G$ and every vertex in $V(K)$ has neighbors in at most $\rho$ branch sets of $K$. Since $C_{i_1},\ldots, C_{i_{\mu_{\varsigma-1}+1}}$ are branch sets of $K$, it follows that $K^-=(C_{i_1},\ldots, C_{i_{\mu_{\varsigma-1}+1}})$ is a linear $(\mu_{\varsigma-1}+1)$-model in $G$ and every vertex in $V(K^-)$ has neighbors in at most $\rho$ branch sets of $K^-$.  Therefore, we can apply the inductive hypothesis to $K^-$. Since there is no linear $B$-induced $(\rho, \sigma)$-model in $G$, it follows that \ref{lem:half_ind2}\ref{lem:half_ind2_b} holds. Explicitly, we have:

\sta{\label{st:indhyp} There is a $B$-induced $(\varrho,\varsigma-1)$-model $N^-=(C_{j_1},\ldots, C_{j_{\varrho}}; B_{k_1},\ldots, B_{k_{\varsigma-1}})$ in $G$ where $j_1,\ldots,j_{\varrho},k_1,\ldots,k_{\varsigma-1}\in \{i_1,\ldots, i_{\mu_{\varsigma-1}+1}\}$ with $j_1<\cdots <j_{\varrho}<k_1<\cdots<k_{\varsigma}$, and $B_{k_h}\subseteq C_{k_h}$ for every $h\in \poi_{\varsigma-1}$. In particular, $N^-$ is linear.}

Henceforth, let $N^-$ be as given by \eqref{st:indhyp}. Recall that $M'=(C_{i_1},\ldots, C_{i_{\mu_{\varsigma-1}+1}},B')$, as given by \eqref{st:getaligned}, is $A$-aligned. Since $j_1,\ldots,j_{\varrho},k_1,\ldots,k_{\varsigma-1}\in \{i_1,\ldots, i_{\mu_{\varsigma-1}+1}\}$ with $j_1<\cdots <j_{\varrho}<k_1<\cdots<k_{\varsigma-1}$, it follows that the $(\varrho+\varsigma-1,1)$-model $(C_{j_1},\ldots, C_{j_{\varrho}}, C_{k_1},\ldots, C_{k_{\varsigma-1}}; B')$ in $G$ is also $A$-aligned. In particular, there are $\varrho+\varsigma-1$ pairwise disjoint subpaths $$(u_h\dd B'\dd v_h:h\in \{j_1,\ldots,j_{\varrho},k_1,\ldots,k_{\varsigma-1}\})$$
of $B'$ (possibly of length zero) such that 
 \begin{itemize}
        \item there is an end $u$ of the path $B'$ such that $B'$ traverses $$u,u_{j_1},v_{j_1},\ldots, u_{j_{\varrho}},v_{j_{\varrho}},u_{k_1},v_{k_1},\ldots, u_{k_{\varsigma-1}},v_{k_{\varsigma-1}}$$
        in this order; and
        \item for every $h\in \{j_1,\ldots,j_{\varrho},k_1,\ldots,k_{\varsigma-1}\}$, all vertices in $B'$ with a neighbor in $C_h$ are contained in $u_h\dd B'\dd v_h$.
        \end{itemize}
        
Let $B=u\dd B'\dd v_{j_{\varrho}}$. It follows that for every $h\in \poi_{\varrho}$, the paths $B$ and $C_{j_h}$ are not anticomplete in $G$, while for every $h\in \poi_{\varsigma-1}$, the paths $B$ and $C_{k_h}$ are anticomplete in $G$. On the other hand, $N^-=(C_{j_1},\ldots, C_{j_{\varrho}}; B_{k_1},\ldots, B_{k_{\varsigma-1}})$, as given by \eqref{st:indhyp}, is a linear $B$-induced $(\varrho,\varsigma-1)$-model in $G$ with $B_{k_h}\subseteq C_{k_h}$ for every $h\in \poi_{\varsigma-1}$. 

Let  $k_{\varsigma}=\mu_{\varsigma}+1$ and let $B_{k_{\varsigma}}=B$. We deduce that
$$N=(C_{j_1},\ldots, C_{j_{\varrho}}; B_{k_1},\ldots, B_{k_{\varsigma-1}},B_{k_{\varsigma}})$$
is a linear $B$-induced $(\varrho,\varsigma)$-model in $G$. But now $N$ satisfies \ref{lem:half_ind2}\ref{lem:half_ind2_b} because, from \eqref{st:getaligned}, \eqref{st:indhyp}, and the choice of $k_{\varsigma}$ and $B_{k_{\varsigma}}$, it follows that 
\begin{itemize}
    \item $1\leq j_1<\cdots <j_{\varrho}<k_1<\cdots<k_{\varsigma-1}\leq \mu_{\varsigma}<\mu_{\varsigma}+1=k_{\varsigma}=f_{\ref{lem:half_ind2}}$;
    \item $B_{k_h}\subseteq C_{k_h}$ for every $h\in \poi_{\varsigma-1}$; and
    \item $B_{k_{\varsigma}}=B\subseteq B'\subseteq C_{\mu_{\varsigma}+1}=C_{k_{\varsigma}}$.
\end{itemize}
 This completes the proof of Lemma~\ref{lem:half_ind2}.
\end{proof}

With Lemmas~\ref{lem:half_ind0} and \ref{lem:half_ind2} in our arsenal, Theorem~\ref{thm:half_ind_main} is now almost immediate:

\halfindmain*
\begin{proof}
    Let 
    $$\mu=f_{\ref{lem:half_ind2}}(\rho,\rho,\sigma,\sigma)$$ 
    and let $$f_{\ref{thm:half_ind_main}}=f_{\ref{thm:half_ind_main}}(a,\rho,\sigma)=f_{\ref{lem:half_ind0}}(a,\mu,\rho,\sigma).$$ 

Let $G$ be a $K_a$-free graph and let $K^0$ be a linear $f_{\ref{thm:half_ind_main}}$-model in $G$. Apply Lemma~\ref{lem:half_ind0} to $K^0$. If \ref{lem:half_ind0}\ref{lem:half_ind0_b} holds, then there is a linear $B$-induced $(\rho,\sigma)$-model in $G$, as desired. So we may assume that \ref{lem:half_ind0}\ref{lem:half_ind0_a} holds; that is, there is a linear $\mu$-model $K$ in $G$ such that every vertex in $V(K)$ has neighbors in at most $\rho$ branch sets of $K$. But now from Lemma~\ref{lem:half_ind2} applied to $K$, and by the choice of $\mu$, we deduce that there is a $B$-induced $(\rho,\sigma)$-model in $G$. This completes the proof of Theorem~\ref{thm:half_ind_main}.
\end{proof}

\section{From half induced to induced}\label{sec:halftofull}

We are now ready to prove Theorem~\ref{thm:EEZ_candidate} (which, as pointed out in Section~\ref{sec:model}, is equivalent to Theorem~\ref{thm:EEZ_ind_minor}):

\candidate*
\begin{proof}
Let

$$\beta=f_{\ref{thm:multiramsey}}(2^{c^2},2,2b);$$

$$\alpha=\beta(c+1);$$

$$\theta=\alpha^{2(b-1)}b^2;$$

$$\gamma=a^b\binom{\theta}{b}+c\binom{\theta}{\alpha};$$

$$\mu=f_{\ref{thm:half_ind_main}}(a,\gamma,\theta).$$

We claim that
$$f_{\ref{thm:EEZ_candidate}}=f_{\ref{thm:EEZ_candidate}}(a,b,c)=f_{\ref{thm:comp_model}}(a,b,c,\mu)$$
satisfies the theorem.

Suppose for a contradiction that there is a $(a,b,c)$-candidate $G$ with $\tw(G)>f_{\ref{thm:EEZ_candidate}}$. By Theorem~\ref{thm:comp_model}, there is a linear $\mu$-model in $G$. By Theorem~\ref{thm:half_ind_main} and the choice of $\mu$, there is a linear $B$-induced $(\gamma,\theta)$-model in $G$. It follows that there is a linear $A$-induced $(\theta,\gamma)$-model $M$ in $G$ (though the reader may notice we will never use the fact that $M$ is $A$-linear).

Let $A_1,\dots, A_{\theta}$ be the branch sets of $M$ contained in $A(M)$. Then $A_1,\dots, A_{\theta}$ are pairwise anticomplete in $G$. Let $\mca{B}$ be the set of all branch sets of $M$ contained in $B(M)$ (so we have $|\mca{B}|=\gamma$), and let $\mca{B}'$ be the set of all $B\in \mca{B}$ for which every vertex in $B$ has neighbors in fewer than $b$ sets among $A_1,\dots, A_{\theta}$. We claim that:

\sta{\label{st:BseesfewAs} $|\mca{B}'|\geq c\binom{\theta}{\alpha}$.}

Suppose not. Since $|\mca{B}|=\gamma=a^b\binom{\theta}{b}+c\binom{\theta}{\alpha}$, it follows that $|\mca{B}\setminus \mca{B}'|\geq  a^b\binom{\theta}{b}$. By definition, for every $B\in \mca{B}\setminus \mca{B}'$, there is a vertex $x_B\in B$ and a $b$-subset $I_B$ of $\poi_{\theta}$ such that $x_B$ has neighbors in $A_i$ for every $i\in I_B$. Since $|\mca{B}\setminus \mca{B}'|\geq a^{b}\binom{\theta}{b}$, it follows that there is a $b$-subset $I$ of $\poi_{\theta}$ as well as a $a^{b}$-subset $\mca{T}$ of $\mca{B}\setminus \mca{B}'$ such that for every $B\in \mca{T}$, we have $I_B=I$. Moreover, since $G$ is $K_{a}$-free and since $\mca{T}=a^{b}$, it follows from Theorem~\ref{thm:classicalramsey} that there is a $b$-subset $\mca{S}$ of $\mca{T}$ such that $\{x_B:B\in \mca{T}\}$ is a stable set of cardinality $b$ in $G$. But then $(A_i:i\in I \ ;\  \{x_B\}: B\in b)$ is an induced $(b,b)$-model in $G$, contrary to the assumption that $G$ is an $(a,b,c)$-candidate. This proves \eqref{st:BseesfewAs}.

\sta{\label{st:aligned} Let $B\in \mca{B}'$. Then there are $i_{1,B},\ldots,i_{\alpha,B}\in \poi_\theta$ with $i_{1,B}<\cdots <i_{\alpha,B}$, and $B'\subseteq B$, such that $(A_{i_{1,B}},\ldots, A_{i_{\alpha,B}}; B')$ is an $A$-aligned $(\alpha,1)$-model in $G$.}

To see this, note that $M_B=(A_1,\ldots, A_{\theta},B)$ is a $B$-linear $(\theta,1)$-model in $G$ (which is also $A$-induced). Moreover, by the definition of $\mca{B}'$, every vertex in $B$ has neighbors in fewer that $b$ sets among $A_1,\ldots, A_{\theta}$. Recall also that $\theta=\alpha^{2(b-1)}(b(b-1)+b)$. Therefore, we can apply Lemma~\ref{lem:half_ind1} to $M_B$. Observe that \ref{lem:half_ind1}\ref{lem:half_ind1_a} yields exactly the assertion of \eqref{st:aligned}. So we may assume that \ref{lem:half_ind1}\ref{lem:half_ind1_b} holds; in particular, there is a $B$-induced $(b, b)$-model $M''=(A''_{1},\ldots, A''_{b}; B''_{1},\ldots, B''_{b})$ in $G$ such that $A''_{1},\ldots, A''_{b}\in \{A_{1},\ldots, A_{\theta}\}$. It follows that $M''$ is $A$-induced, as well, because $M_B$ is. But now $M''$ is an induced $(b,b)$-model in $G$, a contradiction. This proves \eqref{st:aligned}.
\medskip

Combining \eqref{st:BseesfewAs} and \eqref{st:aligned}, we deduce that there are 
\begin{itemize}
    \item $i_1,\ldots,i_{\alpha}\in \poi_\theta$ with $i_1<\cdots <i_{\alpha}$;
    \item $B_1,\ldots, B_{c}\in \mca{B}'$; and
    \item $B'_j\subseteq B_j$ for every $j\in \poi_{c}$
\end{itemize}
such that $N=(A_{i_1},\ldots, A_{i_{\alpha}}; B'_1,\ldots, B'_{c})$ is an $A$-aligned $(\alpha,c)$-model in $G$. Recall that $\alpha=\beta(c+1)$. For every $j\in \poi_{\beta}$ and $k\in \poi_{c+1}$, we write 
$$A_{j,k}=A_{i_{(c+1)(j-1)+k}}.$$

Let $l\in \poi_{c}$ be fixed. Since $N$ is $A$-aligned, it follows that there are $\beta$ pairwise disjoint subpaths $(P^l_j=u^l_{j}\dd B'_l\dd v^l_{j}:j\in \poi_{\beta})$ of $B'_l$ (possibly of length zero) such that: 
\begin{itemize}
        \item there is an end $u_l$ of the path $B'_l$ such that $B'_l$ traverses $u_l,u^l_{1},v^l_{1},\ldots, u^l_{\beta},v^l_{\beta}$ in this order; and
        \item for every $j\in \poi_{\beta}$, all vertices in $B'_l$ with a neighbor in $\bigcup_{k=1}^{c}A_{j,k}$ are contained in $P^l_j$.
        \end{itemize}
Note that the paths $(P^l_j=u^l_{j}\dd B'_l\dd v^l_{j}:j\in \poi_{\beta})$ are pairwise anticomplete, because $B'_l$ contains a neighbor of $A_{j, c+1}$ between $v_j^l$ and $u_{j+1}^l$ for every $j \in \mathbb{N}_{\beta-1}$.

For every $j\in \poi_{\beta}$, let 
$$M_j=(A_{j,1},\ldots, A_{j,c} ; P^1_j,\ldots, P^{c}_j).$$
Then $M_j$ is a $(c,c)$-model in $G$ which is linear and $A$-induced. Moreover, for all distinct $j,j'\in \poi_{\beta}$, since $N$ is $A$-induced and $A$-aligned, the sets $A(M_j)$ and $A(M_{j'})\cup B(M_{j'})$ are anticomplete in $G$.
\medskip

Now, for every $2$-subset $\{j,j'\}$ of $\poi_{\beta}$ with $j < j'$, let $\Phi(\{j,j'\})$ be the set of all $(l,l')\in \poi_{c}\times \poi_{c}$ such that $P^l_j$ and $P^{l'}_{j'}$ are not anticomplete in $G$; in particular, we have $(l,l')\in \Phi(\{j,j'\})$ only if $l\neq l'$. It follows that 
$$\Phi: \binom{\poi_{\beta}}{2}\rightarrow 2^{\poi_{c}\times \poi_{c}}$$
is a well-defined map. From Theorem~\ref{thm:multiramsey} and the choice of $\beta$, we deduce that there is a subset $F\subseteq \poi_{c}\times \poi_{c}$ as well as a $2b$-subset $Z$ of $\poi_{\beta}$ such that for every $2$-subset $\{j,j'\}$ of $Z$, we have $\Phi(\{j,j'\})=F$. Let us write 
$Z=\{j_1,\ldots, j_{b},j'_{1},\ldots, j'_{b}\}$
where $j_1<\cdots<j_{b}<j'_{1}<\cdots< j'_{b}$.

Assume that $F\neq \varnothing$; say $(l,l')\in F$. Then, for all $k,k'\in \poi_{b}$, we have $(l,l')\in F=\Phi(\{j_{k},j'_{k'}\})$. It follows that $l\neq l'$, and for all $k,k'\in \poi_{b}$, the paths $P^l_{j_k}$ and $P^{l'}_{j'_{k'}}$ are not anticomplete in $G$. But then 
$$(P^l_{j_1}, \ldots, P^l_{j_{b}}; P^{l'}_{j'_{1}}, \ldots, P^{l'}_{j'_{b}})$$
is a (linear) induced $(b,b)$-model in $G$, a contradiction.

We deduce that $F=\varnothing$. In particular, we have $\Phi(\{j_1, j'_1\})=F=\varnothing$. From the definition of $\Phi$, it follows that for all $l,l'\in \poi_{c}$, the paths $P^l_{j_1}$ and $P^{l'}_{j'_1}$ are anticomplete in $G$. In other words, $B(M_{j_1})$ and $B(M_{j'_1})$ are anticomplete in $G$, and so $X=A(M_{j_1})\cup B(M_{j_1})$ and $Y=A(M_{j'_1})\cup B(M_{j'_1})$ are anticomplete in $G$. But now $X$ and $Y$ are anticomplete subsets of $V(G)$, each inducing a subgraph of treewidth at least $c$ (because that subgraph has a minor isomorphic to $K_{c,c}$),  contrary to the assumption that $G$ is an $(a,b,c)$-candidate. This completes the proof of Theorem~\ref{thm:EEZ_candidate}.
\end{proof}

\section{From induced minors to induced subgraphs} \label{sec:isg}
In this section, we deduce Theorem~\ref{thm:EEZ_isg} from Theorem~\ref{thm:EEZ_candidate}. We will also use the main result of \cite{tw16} which involves a new set of definitions.
\medskip

For a set $X$, a linear order $\pre$ on $X$, and $x,y\in X$, we write $x\prec y$ to mean $x\pre y$ and $x$ and $y$ are distinct. For an element $x\in X$ and a subset $Y\subseteq X$, we write $x\prec Y$ to mean $x\prec y$ for every $y\in Y$. Similarly, we write $Y\prec x$ to mean $y\prec x$ for every $y\in Y$.

Let $l,s\in \poi$. An \textit{$(s,l)$-constellation} is a graph $\mf{c}$ in which there is a stable set $S_{\mf{c}}$ of cardinality $s$ such that $\mf{c}\setminus S_{\mf{c}}$ has exactly $l$ components, every component of $\mf{c}\setminus S_{\mf{c}}$ is a path, and each vertex $x\in S_{\mf{c}}$ has at least one neighbor in each component of $\mf{c}\setminus S_{\mf{c}}$. We denote by $\mca{L}_{\mf{c}}$ the set of all components $\mf{c}\setminus S_{\mf{c}}$ (each of which is a path), and also denote $\mf{c}$ by the pair $(S_{\mf{c}},\mca{L}_{\mf{c}})$.When $l=1$, say $\mca{L}_{\mf{c}}=\{L_{\mf{c}}\}$, we call $\mf{c}$ an \textit{$s$-constellation}, and denote it by the pair $(S_{\mf{c}},L_{\mf{c}})$ (observe that this matches the definition given in Section~\ref{sec:intro}). For a graph $G$, by an \textit{$(s,l)$-constellation in $G$} we mean an induced subgraph of $G$ which is an $(s,l)$-constellation.

By a \textit{$\mf{c}$-route} we mean a path $R$ in $\mf{c}$ with ends in $S_{\mf{c}}$ and with $R^*\subseteq V(\mca{L}_{\mf{c}})$, or equivalently, with $R^*\subseteq V(L)$ for some $L\in \mca{L}_{\mf{c}}$. For $d\in \poi$, we say that $\mf{c}$ is \textit{$d$-ample} if there is no $\mf{c}$-route of length at most $d+1$. We also say that $\mf{c}$ is \textit{ample} if $\mf{c}$ is $1$-ample. It follows that $\mf{c}$ is $1$-ample if and only if no two vertices in $S_{\mf{c}}$ have a common neighbor in $V(\mca{L}_{\mf{c}})$ (again, note that this is consistent with the definition given in Section~\ref{sec:intro}).

We say that $\mf{c}$ is \textit{interrupted} if there is a linear order $\pre$ on $S_{\mf{c}}$ such that for all $x,y,z\in S_{\mf{c}}$ with $x\prec y\prec z$ and every $\mf{c}$-route $R$ from $x$ to $y$, the vertex $z$ has a neighbor in $R$ (see Figure~\ref{fig:interrupted}). For $q\in \poi$, we say that a $\mf{c}$ is \emph{$q$-zigzagged} if there is a linear order $\pre$ on $S_{\mf{c}}$ such that for all $x,y\in S_{\mf{c}}$ with $x\prec y$ and every $\mf{c}$-route $R$ from $x$ to $y$, there are fewer than $q$ vertices $z\in S_{\mf{c}}$ where $x\prec z\prec y$ and $z$ has no neighbor in $R$.

The main result of \cite{tw16} is the following:

\begin{theorem}[Chudnovsky, Hajebi, Spirkl \cite{tw16}]\label{thm:motherKtt}
For all $l,l',r,s,s'\in \poi$, there are constants $f_{\ref{thm:motherKtt}}=f_{\ref{thm:motherKtt}}(l,l',r,s,s')\in \poi$ and $g_{\ref{thm:motherKtt}}=g_{\ref{thm:motherKtt}}(l,l',r,s,s')\in \poi$ with the following property. Let $G$ be a graph and assume that there is an induced $(f_{\ref{thm:motherKtt}}, g_{\ref{thm:motherKtt}})$-model in $G$. Then one of the following holds.
   \begin{enumerate}[\rm (a)]
        \item\label{thm:motherKtt_a}There is an induced subgraph of $G$ isomorphic to either $K_{r,r}$, a subdivision of $W_{r\times r}$, or the line graph of a subdivision of $W_{r\times r}$.
        \item\label{thm:motherKtt_b} There is an ample interrupted $(s,l)$-constellation $G$. 
    \item\label{thm:motherKtt_c} There is an ample $2r^2$-zigzagged $(s',l')$-constellation in $G$.
    \end{enumerate}
\end{theorem}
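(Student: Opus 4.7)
The plan is to take a sufficiently large induced $(f, g)$-model $M = (A_1, \ldots, A_f; B_1, \ldots, B_g)$ in $G$ and, through successive Ramsey-type refinements, extract from it one of the three outcomes. The guiding idea is to try to build the desired constellation by using representatives of the $B$-side as the stable set $S_{\mf{c}}$ and subpaths of the $A$-side as the paths in $\mca{L}_{\mf{c}}$; any failure to do so will be leveraged into outcome~\ref{thm:motherKtt_a}.

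First, pass to a submodel in which each $B_j$ has been replaced by a single vertex $b_j$. If some $B_j$ contains a vertex with neighbors in arbitrarily many $A_i$'s, a Ramsey argument in the style of \eqref{st:BseesfewAs} quickly produces a large induced $K_{r,r}$, giving outcome~\ref{thm:motherKtt_a}; otherwise a single representative from each $B_j$ preserves enough attachments. Because the $B_j$'s are pairwise anticomplete, $\{b_1, \ldots, b_g\}$ is a stable set. Using connectivity of each $A_i$, replace it by a minimal subpath $P_i$ meeting the neighborhood of each remaining $b_j$. Then apply Lemma~\ref{lem:half_ind1} to each $P_i$: either one finds many $b_j$'s whose neighborhoods on $P_i$ lie in pairwise disjoint intervals (the \emph{aligned} case), or $P_i$ can be split into many pairwise anticomplete subpaths each still attached to every $b_j$, yielding a large induced $(r, r)$-model whose further analysis, in the spirit of Theorem~\ref{thm:EEZ_candidate}, again produces outcome~\ref{thm:motherKtt_a}. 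A product-Ramsey application (as in the proof of~\eqref{st:antiint2}) then forces the alignment orderings to agree across all the $P_i$'s, and ampleness of the resulting constellation can be enforced by trimming the intervals of each $P_i$ between consecutive attachments, absorbing any ``short route'' between two $b_j$'s by splitting off a $K_{r,r}$.

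The remaining task is the dichotomy between the interrupted outcome~\ref{thm:motherKtt_b} and the zigzagged outcome~\ref{thm:motherKtt_c}, and I expect the main difficulty to lie here. Given the aligned, ample constellation $\mf{c}$ with its natural linear order $\pre$ on $S_{\mf{c}}$, one counts, for each ordered pair $x \prec y$ in $S_{\mf{c}}$ and each $\mf{c}$-route $R$ from $x$ to $y$, the number of vertices $z$ with $x \prec z \prec y$ that have no neighbor on $R$. If a uniform bound of $2r^2$ can be arranged after a further multi-Ramsey cleanup on ordered triples, outcome~\ref{thm:motherKtt_c} is immediate. Otherwise, a route with many skipped $z$'s acts as a ``shortcut,'' and the heart of the argument is to use many such shortcuts in parallel either to assemble an induced subdivision of $W_{r\times r}$ or of its line graph or a large induced $K_{r,r}$ (outcome~\ref{thm:motherKtt_a}), or, through iterative pruning that retains only those routes hitting every later vertex of the stable set, to force the interruption property of outcome~\ref{thm:motherKtt_b}. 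Controlling the nested Ramsey constants so that each of $s$, $s'$, $l$, $l'$, $r$ is large enough at the end, while preserving ampleness throughout the pruning and re-ordering, is the delicate bookkeeping and is where the bulk of the technical work should be concentrated.
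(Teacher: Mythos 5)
Before anything else, note that Theorem~\ref{thm:motherKtt} is not proved in this paper at all: it is explicitly imported as ``The main result of \cite{tw16}'' and is used as a black box in Section~\ref{sec:isg}. So there is no ``paper's own proof'' here to compare against, and the honest benchmark would be the argument in \cite{tw16} itself, which is a substantial piece of work (the theorem combines wall/line-graph/complete-bipartite extraction with a fine structural dichotomy between ``interrupted'' and ``zigzagged'' constellations).

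Taking the proposal on its own terms, the first reduction is where a genuine gap appears. You write that if some $B_j$ has a vertex with neighbors in many $A_i$'s you get $K_{r,r}$, and ``otherwise a single representative from each $B_j$ preserves enough attachments.'' This is a false dichotomy. The negation of the first clause is that \emph{every} vertex in $B_j$ meets only boundedly many $A_i$'s --- possibly as few as one. In that regime no single vertex of $B_j$ can serve as a stand-in for the whole path: the branch set $B_j$ is attached to all $f$ of the $A_i$'s collectively, but each of its vertices sees only a few, so replacing $B_j$ by one vertex collapses the $(f,g)$-model to something with tiny degree on the $B$-side, and this case certainly does not hand you $K_{r,r}$ either. (In fact the situation in which each vertex of a branch set sees few opposite branch sets is precisely the one the paper treats with alignment arguments, not with representative vertices; compare the use of Lemma~\ref{lem:half_ind1} inside the proof of Theorem~\ref{thm:EEZ_candidate}, where the $B_j$'s stay as paths throughout.) Similarly, passing from ``a vertex with many attachments'' to an induced $K_{r,r}$ requires the $A_i$'s to be collapsible to single vertices adjacent to many $B$-representatives simultaneously, which needs its own argument. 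So the very first step of the sketch needs to be replaced by an alignment-type analysis on the $B$-side paths, keeping them as paths.

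The rest of the roadmap --- alignment on the $A$-side via Lemma~\ref{lem:half_ind1}, a product-Ramsey step to homogenize the alignment orderings, and then a dichotomy between the interrupted and zigzagged outcomes obtained by counting skipped vertices along routes --- is plausible in outline and broadly in the spirit of the techniques of this paper series (\cite{tw9,tw11,tw12,tw16}). But as you yourself flag, the key difficulty (extracting a wall or its line graph from ``shortcut'' routes, and making the interrupted/zigzagged case distinction come out cleanly under Ramsey pruning while preserving ampleness) is left entirely open, and that is in fact where most of the technical content of \cite{tw16} lives. So even setting aside the first-step error, the sketch stops short of the part that needs a new idea.
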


We will also need the following lemma from \cite{tw16}. We include the proof as it is short:

\begin{lemma}[Chudnovsky, Hajebi, Spirkl \cite{tw16}]\label{lem:eez_zigzag2}
Let $c,q\in \poi$ and let  $\mf{c}$ be an ample $q$-zigzagged $\left(2c+3q, 2c{c+q-1\choose c}\right)$-constellation. Then there are anticomplete subsets $X,Y$ of $V(\mf{c})$ with $\tw(X),\tw(Y)\geq c$.
\end{lemma}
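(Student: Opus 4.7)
The plan is to use the $q$-zigzag order $\pre$ to partition $S_{\mf{c}}$ into three consecutive blocks---a ``small'' block $X_0$, a ``middle'' block $M$ of size $q$, and a ``large'' block $Y_0$---with $|X_0|=|Y_0|=c+q$, so that $|S_{\mf{c}}|=2c+3q$, and to split each path of $\mca{L}_{\mf{c}}$ using the $M$-neighborhood as a separator. For every $L\in\mca{L}_{\mf{c}}$, set $W_L=N(M)\cap V(L)$; by ampleness of $\mf{c}$, $W_L$ is disjoint from $N(s)\cap V(L)$ for every $s\in X_0\cup Y_0$.

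The heart of the argument is a separation claim: for every $x\in X_0$, $y\in Y_0$, and every $\mf{c}$-route $R$ from $x$ to $y$ with $R^*\subseteq V(L)$, the interior $R^*$ meets $W_L$. For suppose not; then no vertex of $M$ has a neighbor on $R^*$, and since $S_{\mf{c}}$ is stable, no vertex of $M$ is adjacent to $x$ or $y$. Thus all $q$ vertices of $M$ miss $R$. But $X_0\pre M\pre Y_0$ forces $M\subseteq\{z\in S_{\mf{c}}:x\prec z\prec y\}$, contradicting $q$-zigzaggedness. It follows that every component of $L\setminus W_L$ contains neighbors of at most one of $X_0,Y_0$; let $L^X$ (resp.\ $L^Y$) denote the union of those components containing an $X_0$-neighbor (resp.\ a $Y_0$-neighbor). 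Setting
\[
X\;=\;X_0\cup\bigcup_{L\in\mca{L}_{\mf{c}}}V(L^X),\qquad Y\;=\;Y_0\cup\bigcup_{L\in\mca{L}_{\mf{c}}}V(L^Y),
\]
the sets $X,Y$ are anticomplete: they are disjoint; $S_{\mf{c}}$ stable forbids $X_0$--$Y_0$ edges; separation and ampleness forbid edges between $X_0$ and $L^Y$, and between $Y_0$ and $L^X$; within a single $L$, $L^X$ and $L^Y$ live in disjoint components of $L\setminus W_L$; and distinct paths of $\mca{L}_{\mf{c}}$ lie in distinct components of $\mf{c}\setminus S_{\mf{c}}$.

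To deduce $\tw(X),\tw(Y)\ge c$, I would exhibit a $K_{c,c}$-minor in each. Take the $\pre$-first $c$ vertices of $X_0$, say $s_1\pre\cdots\pre s_c$, as singleton branch sets on one side; on the other side we need $c$ connected subsets of $\bigcup_L V(L^X)$, each adjacent to all $s_i$. The obstacle is that a single $L^X$ may be disconnected. Here the path-count $2c\binom{c+q-1}{c}$ enters: I would pigeonhole the paths against a label set of size $\binom{c+q-1}{c}$, namely the multisets of size $c$ in $M$, labelling each $L$ by the multiset that records, for each $s_i$, the unique middle vertex whose $W_L$-neighbor is the ``nearest separator'' just to the right (in a fixed orientation of $L$) of the leftmost $L$-neighbor of $s_i$. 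By pigeonhole some $2c$ paths share a common multiset label, and on those paths a common connected component of $L\setminus W_L$ contains the leftmost $L$-neighbor of each of $s_1,\ldots,s_c$; taking any $c$ of those components as branch sets yields the $K_{c,c}$-minor of $X$. The analogous argument with the $\pre$-last $c$ vertices of $Y_0$ yields the $K_{c,c}$-minor of $Y$.

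The main obstacle is formalizing this final pigeonhole: rigorously verifying that paths with matching multiset-label do force the chosen $s_1,\ldots,s_c$-neighborhoods to lie in a single common component of $L\setminus W_L$ adjacent to all of the $s_i$. Everything else---the separation lemma, the anticompleteness of $X,Y$, and the extraction of the $K_{c,c}$-minor from $c$ well-aligned paths---is routine once that alignment is in hand.
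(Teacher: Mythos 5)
Your separation lemma and the resulting anticompleteness of $X$ and $Y$ are correct, and in spirit they parallel the paper's use of the zigzag property (the paper likewise splits $S_{\mf{c}}$ into two outer blocks $S_1,S_2$ with a middle block $Q$ between them and uses zigzaggedness to keep the two sides from talking to each other). However, the treewidth lower bound step is where the two approaches diverge, and yours has a genuine gap that I do not think can be patched in the form you propose.

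The gap is precisely the claim you flag as the ``main obstacle'': that paths with the same multiset label have the leftmost $L$-neighbors of $s_1,\ldots,s_c$ all in a single common component of $L\setminus W_L$. The label you define is blind to which component each $u_i$ (the leftmost $L$-neighbor of $s_i$) lives in; it only records the $M$-vertex adjacent to the first separator to the right of $u_i$. Nothing prevents two consecutive $u_i$'s from being separated by a $W_L$-vertex while still producing identical labels, so two paths with matching labels can have the $c$ neighborhoods scattered across distinct components. (In the degenerate case $q=1$, every path gets the same trivial label, yet components can be arbitrarily fragmented.) Since $L^X$ is in general disconnected and you need a \emph{connected} branch set meeting all of $s_1,\ldots,s_c$, the argument as written does not produce the $K_{c,c}$-minor.

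The paper sidesteps this entirely by not using components of $L\setminus W_L$ as branch sets. It fixes an extreme vertex $x_1$ (the $\prec$-smallest, before $S_1$), and for each $L$ picks a \emph{shortest} $\mf{c}$-route $R_{1,L}$ from $x_1$ to some vertex of $Q$ with interior inside $L$. The interior $R^*_{1,L}$ is automatically a connected subpath of $L$, and the $q$-zigzag property applied between $x_1$ and the $Q$-end of the route immediately forces at least $|S_1|-(q-1)=c$ vertices of $S_1$ to have a neighbor on $R^*_{1,L}$ (since $S_{\mf{c}}$ is stable, any such neighbor is necessarily interior). The pigeonhole is then the straightforward one on which $c$-subset of $S_1$, with $\binom{c+q-1}{c}$ possibilities, which is exactly the quantity in the lemma statement. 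Minimality of $R_{1,L}$ (together with ampleness) also delivers the anticompleteness between the two halves, replacing your separation argument. So the fix to your proposal is essentially to abandon the components-of-$L\setminus W_L$ decomposition and take interiors of shortest routes to $M$ as the path-side branch sets instead.
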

\begin{proof}
    Since $|S_{\mf{c}}|=2c+3q$, we may choose $x_1,x_2\in S_{\mf{c}}$ and pairwise disjoint subsets $Q,S_1,S_2\subseteq S_{\mf{c}}$ such that $|Q|=q$, $|S_1|=|S_2|=c+q-1$ and $x_1\prec S_1\prec Q\prec S_2\prec x_2$. 
    
    For each $i\in \{1,2\}$ and every $L\in \mca{L}_{\mf{c}}$, let $R_{i,L}$ be a $\mf{c}$-route from $x_i$ to a vertex in $Q$ with $|R_{i,L}|$ as small as possible.  We claim that:

\sta{\label{st:eachpathanti} For every $L\in \mca{L}_{\mf{c}}$, the sets $S_1$ and $R^*_{2,L}$ are anticomplete in $\mf{c}$, and the sets $S_2$ and $R^*_{1,L}$ are anticomplete in $\mf{c}$.}

Suppose not. Then, by symmetry, we may assume that some for some $L\in \mca{L}_{\mf{c}}$, there is a vertex $u\in S_2$ with a neighbor in $R^*_{1,L}$. Let $y\in Q$ be the end of $R_{1,L}$ other than $x$. Since $\mf{c}$ is ample, it follows that there is a $\mf{c}$-route $R'$ from $x_1$ to $u$ with $R'^*\subseteq R_{1,L}^*\setminus N_R(y)$. Since $\mf{c}$ is $q$-zigzagged, and since $x_1\prec Q\prec u$, it follows that some vertex $z\in Q$ has a neighbor in $R'^*$. Consequently, there is a $\mf{c}$-route $R''$ from $x$ to $z\in Q_1$ with $R''^*\subseteq R'^*\subseteq R_{1,L}^*\setminus N_R(y)$, and so $|R''|<|R_{1,L}|$. This violates the choice of $R_{1,L}$, hence proving \eqref{st:eachpathanti}.
\medskip

Let $\mca{L}_1,\mca{L}_2\subseteq \mca{L}_{\mf{c}}$ be disjoint with $|\mca{L}_1|=|\mca{L}_2|=c{c+q-1\choose c}$.  Since $\mf{c}$ is $q$-zigzagged and since $|S_1|=|S_2|=c+q-1$, it follows that for every $i\in \{1,2\}$ and every $L\in \mca{L}_{i}$, there is a $c$-subset $S'_{i,L}$ of $S_i$ such that every vertex in $S'_{i,L}$ has a neighbor in $R^*_{i,L}$. Since $|\mca{L}_1|=|\mca{L}_2|=c{c+q-1\choose c}$, it follows that for every $i\in \{1,2\}$, there is a $c$-subset $S'_i$ of $S_i$ and a $c$-subset $\mca{L}'_i$ of $\mca{L}_i$ such that for every $L\in \mca{L}'_i$, we have $S'_{i,L}=S'_i$.

Now, let
$$X=(S'_1,\{R^*_{1,L}:L\in \mca{L}'_1\})$$
and let
$$Y=(S'_2,\{R^*_{2,L}:L\in \mca{L}'_2\}).$$

By \eqref{st:eachpathanti} and since $\mf{c}$ is a constellation, it follows that $X$ and $Y$ are anticomplete $(c,c)$-constellations in $\mf{c}$. In particular, $X$ and $Y$ are anticomplete induced subgraphs of $\mf{c}$ each with an induced minor isomorphic to $K_{c,c}$. Hence, we have $\tw(X),\tw(Y)\geq c$. This completes the proof of Lemma~\ref{lem:eez_zigzag2}.
\end{proof}

We are now ready to prove Theorem~\ref{thm:EEZ_isg}, which we restate:

\eezisg*
\begin{proof}
    Let $r=\max\{2c,t\}$. Let $s'=2c+6r^2$ and let
$l'=2c\binom{c+2r^2-1}{c}$. Let
$$b_1=f_{\ref{thm:motherKtt}}(1,l',r,t,s'),$$
let
$$b_2=g_{\ref{thm:motherKtt}}(1,l',r,t,s')$$
and let $b=\max\{b_1,b_2\}$. We claim that
$$f_{\ref{thm:EEZ_isg}}=f_{\ref{thm:EEZ_isg}}(c,t)=f_{\ref{thm:EEZ_candidate}}(t+1,b,c)$$
satisfies the theorem.
\medskip

Let $G$ be a graph with $\tw(G)>f_{\ref{thm:EEZ_isg}}$. By Theorem~\ref{thm:EEZ_candidate}, $G$ is not a $(t+1,b,c)$-candidate. If $G$ has an induced subgraph isomorphic to $K_{t+1}$, then \ref{thm:EEZ_isg}\ref{thm:EEZ_isg_a} holds. Also, if there are anticomplete induced subgraphs $X,Y$ of $G$ with $\tw(X),\tw(Y)\geq c$, then \ref{thm:EEZ_isg}\ref{thm:EEZ_isg_c} holds. Therefore, we may assume that there is an induced $(b,b)$-model in $G$. In particular, there is an induced $(b_1,b_2)$-model in $G$. From this, Theorem~\ref{thm:motherKtt} and the choice of $b_1, b_2$ and $r$, we deduce that one of the following holds.
\begin{itemize}
   \item There is an induced subgraph of $G$ isomorphic to either $K_{t,t}$, a subdivision of $W_{2c\times 2c}$, or the line graph of a subdivision of $W_{2c\times 2c}$.
        \item There is an ample interrupted $t$-constellation $G$. 
    \item There is an ample $2r^2$-zigzagged $(s',l')$-constellation in $G$.
\end{itemize}

Assume that the first bullet holds. Then either $G$ has an induced subgraph isomorphic to $K_{t,t}$, or there are anticomplete induced subgraphs $X,Y$ of $G$ such that both $X$ and $Y$ are isomorphic to either a subdivision of $W_{c\times c}$ or the line graph of a subdivision of $W_{c\times c}$. It follows that either \ref{thm:EEZ_isg}\ref{thm:EEZ_isg_a} or \ref{thm:EEZ_isg}\ref{thm:EEZ_isg_c} holds.

Since the second bullet and \ref{thm:EEZ_isg}\ref{thm:EEZ_isg_b} are identical, we may assume that the third bullet holds; that is, there is an ample and $2r^2$-zigzagged $(s',l')$-constellation $\mf{c}$ in $G$. But then by Lemma~\ref{lem:eez_zigzag2} and the choice of $s'$ and $l'$, there are anticomplete subsets $X,Y$ of $V(G)$ with $\tw(X),\tw(Y)\geq c$, and so \ref{thm:EEZ_isg}\ref{thm:EEZ_isg_c} holds. This completes the proof of Theorem~\ref{thm:EEZ_isg}.
\end{proof}

\section{Acknowledgements}
 This work was partly done during the 2024 Barbados Graph Theory Workshop at the Bellairs Research Institute of McGill University, in Holetown, Barbados. We thank the organizers for inviting us and for creating a stimulating work environment.

\bibliographystyle{plain}
\bibliography{ref}

\end{document}